\documentclass[11pt]{amsart}

\usepackage[T1]{fontenc}
\usepackage{lmodern}
\usepackage{microtype}
\usepackage[left=0.8in,right=0.8in,top=1in,bottom=1in]{geometry}
\allowdisplaybreaks[2]
\usepackage{latexsym,amssymb,amsmath}
\usepackage{graphicx,float,color,fancybox,shapepar,setspace,hyperref}

\usepackage{etoolbox}

\makeatletter
\patchcmd{\abstract}
  {\item[\hskip\labelsep\scshape\abstractname.]}
  {\item[]\makebox[\linewidth][c]{\scshape\abstractname}\par\smallskip}
  {}{}
\makeatother

\newtheorem{theorem}{Theorem}[section]
\newtheorem{lemma}[theorem]{Lemma}
\newtheorem{proposition}[theorem]{Proposition}
\newtheorem{corollary}[theorem]{Corollary}
\newtheorem{conjecture}[theorem]{Conjecture}
\newtheorem{problem}[theorem]{Problem}
\theoremstyle{definition}

\newcommand{\eps}{\varepsilon}
\newcommand{\Tcal}{\mathcal T}
\newcommand{\Scal}{\mathcal S}

\begin{document} 
\raggedbottom
\makeatletter

\def\@settitle{%
  \begin{center}%
    \usefont{OT1}{cmr}{m}{n}%
    \fontsize{17.28pt}{21pt}\selectfont
    \@title
  \end{center}%
}

\patchcmd{\@setauthors}
  {\centering\footnotesize}
  {\centering
   \usefont{OT1}{cmr}{m}{n}%
   \fontsize{12pt}{14.4pt}\selectfont}
  {}{}
\patchcmd{\@setauthors}
  {\MakeUppercase{\authors}}
  {\authors}
  {}{}

\def\@seccntformat#1{%
  \protect\textup{%
    \protect\bfseries
    \csname the#1\endcsname
    \protect\quad
  }%
}

\renewcommand{\section}{\@startsection{section}{1}%
  {\z@}%
  {.7\linespacing\@plus\linespacing}%
  {.5\linespacing}%
  {\usefont{OT1}{cmr}{bx}{n}%
   \fontsize{14.4pt}{17pt}\selectfont
   \raggedright}}

\makeatother\title{Gap spectra and densities of slow Fibonacci walks}

\author{Yaping Mao$^{\dagger}$ and Qinghong Zhao$^{\ddagger}$}

\thanks{$^{\dagger}$Academy of Plateau Science and Sustainability and
School of Mathematics and Statistics, Qinghai Normal University,
Xining, Qinghai 810008, China. \texttt{yapingmao@outlook.com}}

\thanks{$^{\ddagger}$Corresponding author: School of Mathematical
Sciences, Huaqiao University, Quanzhou 362021, China.
\texttt{qzhao@hqu.edu.cn}}

\date{}

\begin{abstract}
Let $F_1=F_2=1$ and $F_{t+2}=F_{t+1}+F_t$ for $t\geq1$. For every $n\geq2$, there are unique integers $a,b,t$ such that $n=aF_t+bF_{t-1}$ with $t\geq2$ and $1\leq a\leq b\leq F_t$. The Fibonacci walk with initial pair $(b,a)$ reaches $n$ as late as possible, and the term following $n$ in this walk is $\lfloor\phi n\rfloor$ when $t$ is even and $\lceil\phi n\rceil$ when $t$ is odd, where $\phi=(1+\sqrt5)/2$. Let $D=\{d_1<d_2<\cdots\}$ and $U=\{u_1<u_2<\cdots\}$ be the sets corresponding to even and odd $t$, respectively. For $\ell,m\geq1$, define $D_\ell=\{d_{k+\ell}-d_k:k\geq1\}$, $U_\ell=\{u_{k+\ell}-u_k:k\geq1\}$, $D_\ell(m)=\{d_k:d_{k+\ell}-d_k=m\}$ and $U_\ell(m)=\{u_k:u_{k+\ell}-u_k=m\}$. Chung, Graham and Spiro conjectured that $D_\ell=U_\ell$ for all $\ell$, and asked for the densities of $D_\ell(m)$ and $U_\ell(m)$, especially when $\ell=1$. In this paper, we determine the third and fourth order gap spectra, and show that the conjecture holds for $\ell=3$ but fails for $\ell=4$. We also answer their density question by characterizing when $D_\ell(m)$ and $U_\ell(m)$ have natural densities and proving that their logarithmic densities always exist and are equal. For $\ell=1$, we give the exact logarithmic densities.

\noindent{\bf Keywords}: Fibonacci sequence; slow Fibonacci walk; density\\
\noindent{\bf 2020 Mathematics Subject Classification}: 11B39; 11B05
\end{abstract}	

\maketitle
\pagestyle{plain}
\thispagestyle{plain}

\section{Introduction}
Let $F_0=0$, $F_1=F_2=1$, and $F_{k+2}=F_{k+1}+F_k$ for $k\geq1$. Given positive integers $a_1$ and $a_2$, the \emph{Fibonacci walk} with initial pair $(a_1,a_2)$ is the sequence $w_k=w_k(a_1,a_2)$ defined by $w_1=a_1,w_2=a_2$ and $w_{k+2}=w_{k+1}+w_k$ for $k\geq1$. Equivalently, $w_k=a_1F_{k-2}+a_2F_{k-1}$ for $k\geq3$. In particular, $w_k(1,1)=F_k$ for $k\geq1$, so the Fibonacci walk with initial pair $(1,1)$ is called the standard Fibonacci sequence. For $n\geq1$, let $s(n;a_1,a_2)$ be the largest integer $s$ such that $w_s(a_1,a_2)=n$, and set $s(n;a_1,a_2)=-\infty$ if no such integer exists. Let $s(n)=\max\limits_{a_1,a_2\geq 1}s(n;a_1,a_2)$. The pair $(a_1,a_2)$ is called \emph{$n$-good} if $s(n;a_1,a_2)=s(n)$, and the corresponding Fibonacci walk is called an \emph{$n$-slow Fibonacci walk}.

The maximal-index problem has been studied in several related forms. In 1991, Cohn~\cite{Cohn} studied recurrence sequences containing a prescribed integer. Further results on representations of prescribed integers at maximal indices in generalized Fibonacci sequences were obtained by Jones and Kiss~\cite{JonesKiss} in 1998 and by Englund and Bicknell-Johnson~\cite{EnglundBicknell} in 2000. In 2020, Chung, Graham, and Spiro~\cite{CGS} studied slow Fibonacci walks, and proved that for every $n\geq2$ there are unique integers $a,b,t$ such that $n=aF_t+bF_{t-1}$ with $t\geq2$ and $1\leq a\leq b\leq F_t$. We call this the canonical representation of $n$. Moreover, the initial pair $(b,a)$ is $n$-good, $s(n)=t+1$, and the term following $n$ in the corresponding walk is $\lfloor\phi n\rfloor$ when $t$ is even and $\lceil\phi n\rceil$ when $t$ is odd, where $\phi=(1+\sqrt5)/2$. Later, Spiro~\cite{Spiro} extended this maximal-index problem to second-order recurrences $w_{k+2}=c_1w_{k+1}+c_2w_k$ with positive integer coefficients $c_1$ and $c_2$. 

For $t\geq2$, let $\Scal_t=\{aF_t+bF_{t-1}:1\leq a\leq b\leq F_t\}$. The uniqueness of the canonical representation shows that the sets $\Scal_t$, $t\geq2$, partition $\{2,3,\ldots\}$. For $n\geq2$, let $\tau(n)$ be the unique integer $t$ such that $n\in\Scal_t$. We call $n$ a \emph{down-integer} if $\tau(n)$ is even and an
\emph{up-integer} if $\tau(n)$ is odd. Let $D=\{n\geq2:\tau(n)\text{ is even}\}=\{d_1<d_2<\cdots\}$ and $U=\{n\geq2:\tau(n)\text{ is odd}\}=\{u_1<u_2<\cdots\}$. The membership of an integer in $D$ or $U$ is called its type. For $\ell\geq1$, define the $\ell$th gap spectra by $D_\ell=\{d_{k+\ell}-d_k:k\geq1\}$ and $U_\ell=\{u_{k+\ell}-u_k:k\geq1\}$. For $\ell,m\geq1$, define the corresponding refined occurrence sets $D_\ell(m)=\{d_k:d_{k+\ell}-d_k=m\}$ and $U_\ell(m)=\{u_k:u_{k+\ell}-u_k=m\}$. Chung, Graham, and Spiro~\cite{CGS} proved that $D_1=U_1=\{1,2,3,5\}$ and
$D_2=U_2=\{2,3,4,5,6,8,10\}$. They further posed the following question: Is $D_3=U_3=\{3,4,5,6,7,8,10,11,13\}$? More generally, they proposed the following conjecture.
\begin{conjecture}[\cite{CGS}]
\label{conj:spectrum-symmetry}
$D_\ell=U_\ell$ for all $\ell$.
\end{conjecture}

In this paper, we prove the proposed value of $D_3=U_3$ and show that Conjecture~\ref{conj:spectrum-symmetry} fails for $\ell=4$.
\begin{theorem}\label{thm:small-spectra}
$D_3=U_3=\{3,4,5,6,7,8,10,11,13\}$  and $D_4=U_4\setminus\{9\}=\{5,6,7,8,10,11,13,16,18\}$. Moreover, $D_4(9)=\varnothing$ and $U_4(9)=\{8\}$.
\end{theorem}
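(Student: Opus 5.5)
We need a structural description of the type sequence. The plan is to show that, from some explicit point on, the pattern of D/U types in a window is determined by a bounded amount of data, and that every bounded window of large integers repeats one seen at small integers, at least up to shift-invariant gap data. Once that holds, $D_3,U_3,D_4,U_4$ can be read off from a finite computation plus the small initial segment.

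First, describe $\Scal_t$ explicitly. For fixed $t$, $aF_t+bF_{t-1}$ with $1\le a\le b\le F_t$ fills the interval $[F_{t+1},F_{t+2}]$ only partially. Consecutive $\Scal_t$ have interleaving ranges: $\min\Scal_t=F_{t+1}$, while $\max\Scal_t=F_t\cdot F_{t+1}$ is much larger. So around an integer $n$ several $t$ compete, and uniqueness decides which one wins. I would use the characterization that the next term is $\lfloor\phi n\rfloor$ or $\lceil\phi n\rceil$. This should let the type of $n$ be expressed through the parity of the length of a Zeckendorf-like (or continued-fraction-like) backward walk $n\mapsto$ the previous term $\approx n/\phi$. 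The backward walk from $n$ and the pair $(n,\text{round}(n/\phi))$ run until the pair stops being valid; the parity of the number of steps gives the type. Nearby integers $n, n+1,\dots,n+M$ with $M\le 18$ have backward walks that stay in lockstep for many steps. They differ only by small perturbations, which shrink by a factor $\phi$ per step. Hence after $O(1)$ steps the differences become bounded offsets among pairs near the bottom.

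The key reduction is a self-similarity lemma: for $n$ large, the type vector $(\text{type}(n),\dots,\text{type}(n+M))$ equals the type vector of a smaller window obtained by one backward step. The exception is when the window straddles a "boundary", i.e.\ when one of the integers has $a$ or $b$ at an extreme ($a=b$, or $b=F_t$, or $a=1$). Boundary windows occur near specific structured integers such as $F_k$, $F_k\pm F_j$ and $aF_t+F_tF_{t-1}$. I would classify them into finitely many families parametrized by $t$, each of which is eventually periodic in $t$ modulo 2 up to the type swap. Consequently every window pattern of length $\le 19$ arises either at $n\le N_0$ for an explicit $N_0$ (a few hundred), or in one of these families, for which it can be computed symbolically.

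With this in hand, the proof has three stages. First, enumerate $D$ and $U$ up to $N_0$ by direct computation, and record all $\ell$-gaps for $\ell=3,4$. This yields the stated sets, including the single occurrence $u_k=8$ with $u_{k+4}-u_k=9$. Second, use the reduction to show that no new gap values arise for large $n$. Third, show $D_4(9)=\varnothing$ and $U_4(9)=\{8\}$ by proving that the specific local configuration producing the gap 9 occurs only at $n=8$. Its reduced image would be a configuration near $n\approx 5$ that is not reachable from any larger window.

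The main obstacle is the self-similarity lemma with its boundary cases: proving rigorously that the type pattern on a window is inherited from the backward image, while controlling the carries when the window crosses an endpoint of some $\Scal_t$. My first attempt would be to handle it via the explicit maps $n\mapsto n-\mathrm{round}(n/\phi)$ restricted to each $\Scal_t$, which sends $\Scal_t$ into $\Scal_{t-1}$ bijectively up to endpoints. I would then verify the endpoint families by hand with Fibonacci identities.
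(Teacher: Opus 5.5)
Your whole argument rests on the self-similarity lemma, and that lemma fails. You picture a window $n,\dots,n+M$ as sitting inside one layer $\Scal_t$ except near ``boundary'' integers. In fact consecutive integers typically lie in different layers. We have $e_t(n+1)=\{e_t(n)+(-1)^t\phi^{-1}\}$, while by Lemma~\ref{lem:candidate} membership in $\Scal_t$ confines $e_t(n)$ to an arc of length asymptotically at most $\phi^{-1}$. Hence most $n\in\Scal_t$ have $n+1\notin\Scal_t$; already $12,13,14,15,16$ lie in $\Scal_4,\Scal_6,\Scal_5,\Scal_4,\Scal_5$. So straddling windows are the generic case, not finitely many families.

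The backward map also does not act on windows. It sends $M+1$ consecutive integers to roughly $(M+1)\phi^{-1}$ integers, so there is no ``smaller window'' whose type vector could be compared. Nor is it a bijection $\Scal_t\to\Scal_{t-1}$, since $|\Scal_t|/|\Scal_{t-1}|=F_t(F_t+1)/(F_{t-1}(F_{t-1}+1))\to\phi^2$.

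Your finiteness conclusion is therefore unfounded, and with $N_0$ of a few hundred it is actually false. The least element of $U_2(10)$ is $1917$. Moreover, $18\in U_4$ can only come from a gap vector $(5,3,5,5)$ or $(5,5,3,5)$, and both contain the pair $(5,5)$.

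What actually governs large windows is a rotation coding with a moving threshold. By Lemma~\ref{lem:layer-coding}, asymptotically (for even $\tau(n)=t$) $n+h\in D$ iff $\{\delta_n+h\phi^{-1}\}<\Theta(n/F_t^2)$. The threshold $\Theta$ sweeps all of $J=[\alpha,\beta]$ within each multiplicative period. So the local patterns are the union of rotation codings over all thresholds in $J$, and they are not inherited from smaller scales. The paper exploits this on both sides:
\begin{itemize}
\item \textbf{Existence.} $\Gamma_3\subseteq D_3\cap U_3$ and $\Gamma_4\subseteq D_4\cap U_4$ come from first-return times of $x\mapsto\{x+\phi^{-1}\}$ to $[0,\theta)$ (Lemma~\ref{lem:return-map}) together with Lemma~\ref{lem:finite-realization}.
\item \textbf{Non-existence.} This needs an argument valid for all $n$, which no finite search can supply. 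From $D_1=U_1$ and $D_2=U_2$ one reduces to weights $9,15$ for three gaps and $4,9,12,14,15$ for four gaps. Weight $4$ is immediate. Every other such gap vector forces one of the configurations $\mathsf P_1,\dots,\mathsf P_6$, whose phase constraints give $\eps_n>\Lambda_r$. Lemma~\ref{lem:exact-shift} then forces a small canonical index, leaving finitely many anchors to check by hand.
\end{itemize}
That same mechanism is what yields $D_4(9)=\varnothing$ and $U_4(9)=\{8\}$. Your third stage has nothing that plays this role.
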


Chung, Graham and Spiro~\cite{CGS} observed that some gap values occur much more rarely than others. For example, the least element of $U_2(10)$ is $1917$. Thus they posed the following problem.
\begin{problem}[\cite{CGS}]
\label{prob:refined-densities}
Determine the densities of $D_\ell(m)$ and $U_\ell(m)$ for various
$\ell$ and $m$, and in particular for $\ell=1$.
\end{problem}
For a set $E\subseteq\mathbb N$, define its lower and upper densities by
\[
\underline d(E)
=\liminf_{X\to\infty}\frac{|E\cap[1,X]|}{X}
\qquad{and}\qquad
\overline d(E)
=\limsup_{X\to\infty}\frac{|E\cap[1,X]|}{X}.
\]
If these quantities are equal, their common value is the natural density
$d(E)$. The logarithmic density of $E$ is
\[
\delta_{\log}(E)
=\lim_{X\to\infty}\frac1{\log X}
\sum_{\substack{n\leq X\\ n\in E}}\frac1n.
\]
Natural density is the standard measure of asymptotic frequency. Logarithmic density is better suited to the multiplicative structure of the Fibonacci scales and may exist when natural density does not. For Problem~\ref{prob:refined-densities}, we prove the following asymptotic formula for the normalized counting functions.
\begin{theorem}\label{thm:phase-density} For $X\geq1$, let $t_*(X)$ be the largest even integer $t\geq2$ such that $F_t^2\leq X$, and let $\lambda_X=X/F_{t_*(X)}^2$. For every fixed $\ell,m\geq1$ and $\phi=(1+\sqrt5)/2$, there exists a continuous, multiplicatively $\phi^4$-periodic function $\mathcal P_{\ell,m}$ such that 
\[
 \frac{|D_\ell(m)\cap[1,X]|}{X}
 =\mathcal P_{\ell,m}(\lambda_X)+o(1)\qquad{and}\qquad
 \frac{|U_\ell(m)\cap[1,X]|}{X}
 =\mathcal P_{\ell,m}(\phi^2\lambda_X)+o(1).
\]
The two normalized counting functions have $\mathcal P_{\ell,m}([1,\phi^4])$ as their common set of subsequential limits. 
\end{theorem}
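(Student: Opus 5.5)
The plan is to exploit the self-similar structure of the blocks $\Scal_t$. For fixed $t$, write elements of $\Scal_t$ as $n=aF_t+bF_{t-1}$ with $(a,b)$ in the triangle $T_t=\{1\le a\le b\le F_t\}$. The map $(a,b)\mapsto n$ rescales $T_t$ by a factor of order $F_t$ in each coordinate, so $\Scal_t$ lives in an interval of length of order $F_t^2$. Consequently $\min\Scal_{t+1}/\min\Scal_t$ and $\max\Scal_{t+1}/\max\Scal_t$ both tend to $\phi^2$.

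The first step is a \emph{local structure lemma}. For fixed $\ell$, whether $n\in D_\ell(m)$ (or $U_\ell(m)$) is decided by the types of the integers in a window $[n,n+C]$ with $C=C(\ell)$; by the result of \cite{CGS} that $D_1,U_1\subseteq\{1,2,3,5\}$, one can take $C\le 5\ell$. The types in such a window are determined by how the nearby integers fall into $\Scal_t$ and the neighbouring blocks $\Scal_{t\pm1}$, which overlap only near their boundaries. Rescaling position by $F_t^2$ and using $F_{t-1}/F_t\to\phi^{-1}$, I expect to show the following. For $n\in\Scal_t\cup\Scal_{t+1}$ at normalized position $x=n/F_t^2$ in a fixed compact range, the local type pattern in the window is, outside a set of $o(F_t^2)$ exceptional $n$, a function of the residues of $(a,b)$ and of the local geometry (which blocks are present near $x$). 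Moreover the proportion of $n$ in $[xF_t^2,(x+dx)F_t^2]$ realizing each pattern converges to a piecewise continuous density $\rho_{\ell,m}^{\pm}(x)\,dx$. This is an equidistribution statement: each block is an affine image of the lattice points of a triangle, and counting lattice points of $T_t$ in thin strips with congruence conditions gives area densities up to $O(F_t)$ error.

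The second step is to assemble the count. Write $|D_\ell(m)\cap[1,X]|=\sum_{t}|D_\ell(m)\cap\Scal_t\cap[1,X]|$, with the parity of $t$ dictating which blocks contribute. Blocks with $F_t^2\le X/\phi^{2r}$ contribute complete counts, each asymptotically $c\,F_t^2$, where $c$ depends only on the parity of $t$ modulo the period. These form a geometric series in $\phi^{-2}$, and in fact $\phi^{-4}$ if the down/up alternation forces periodicity $2$ in $t$. The blocks near $X$ contribute partial counts given by integrating $\rho$ up to $X/F_t^2$. Dividing by $X$ and writing $X=\lambda_XF_{t_*}^2$ expresses the ratio as $\mathcal P_{\ell,m}(\lambda_X)+o(1)$, where $\mathcal P_{\ell,m}$ is built from $\lambda^{-1}$ times integrals of $\rho$ up to $\lambda\phi^{-4j}$-type arguments, with the tail summed geometrically. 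Continuity follows because the partial integrals are continuous in the upper limit. Multiplicative $\phi^4$-periodicity follows because replacing $t_*$ by $t_*+2$ multiplies $F_{t_*}^2$ by $\phi^4(1+o(1))$ and preserves parity, and $\lambda_X$ ranges over $[1,\phi^4)$ up to $o(1)$. For $U$ the roles of even and odd are swapped, which amounts to shifting $t$ by one, i.e.\ rescaling by $\phi^2$; hence $\mathcal P_{\ell,m}(\phi^2\lambda_X)$ with the same function. This also requires a symmetry: the local densities of the down-pattern for even $t$ must match those of the up-pattern for odd $t$. I would verify this via the map $n\mapsto$ (reflection within the block) or by checking that the pattern function depends on $t$ only through parity, with the roles of $D,U$ exchanged.

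Finally, the set of subsequential limits is obtained as follows. Since $\lambda_X$ is dense in $[1,\phi^4]$ as $X$ varies (and in fact takes every value up to $o(1)$), and $\mathcal P_{\ell,m}$ is continuous, the limit set equals $\mathcal P_{\ell,m}([1,\phi^4])$. The same holds for $\phi^2\lambda_X$ by periodicity. The main obstacle is the first step: proving that the type of each integer near $n$ is determined, up to $o(F_t^2)$ exceptions, by a bounded amount of arithmetic data together with the position. The difficulty is that near the overlap of $\Scal_t$ and $\Scal_{t+1}$, integers of both parities interleave in a pattern governed by $\lfloor\phi n\rfloor$-type Beatty behaviour. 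Here I would first try to describe $\Scal_t\cap[y,y+C]$ via the fractional parts $\{b\phi^{-1}\}$-like quantities and apply Weyl equidistribution of $n\phi$ modulo $1$ to obtain the limiting densities.
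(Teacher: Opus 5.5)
\textbf{There is a genuine gap: the first step, which you flag as the main obstacle, is where the proof lives, and the picture you propose for it is wrong.} Your summation step is broadly what is needed: a geometric tail over layers, with periodicity and the limit set read off from $\lambda_X$. The problems are these.

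\emph{The layers are not adjacent blocks.} You treat $\Scal_t$ as meeting only $\Scal_{t\pm1}$ near their ends. In fact $\Scal_t\subseteq[F_{t+1},F_tF_{t+1}]$, so $\min\Scal_{t+1}/\min\Scal_t\to\phi$, not $\phi^2$. About $\tfrac12\log_\phi X$ layers meet every neighbourhood of a large $X$.

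\emph{Congruence classes are the wrong data.} The local type pattern is not a function of congruence classes of $(a,b)$, so lattice counts ``with congruence conditions'' would not produce the densities. The governing quantity is the phase $\delta_n=\{\phi n\}$. Inside one layer it is not an equidistributed quantity attached to $(a,b)$; it is a slowly varying linear function of the normalized coefficients. For canonical $n=aF_t+bF_{t-1}$ one has $\eps_n=\phi^{-t}(\phi b-a)$, that is, $(n/F_t^2,e_t(n))=L_t(a/F_t,b/F_t)$ with $L_t\to L$ invertible (Lemma~\ref{lem:candidate}).

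\emph{The missing key lemma is Lemma~\ref{lem:layer-coding}.} If $n_t/F_t^2\to u$ and the phase tends to $z$, then for each fixed $h$ the index $\tau(n_t+h)-t$ stays bounded. Its parity is decided by whether $\{z+h\phi^{-1}\}<\Theta(u)$. For odd $t$ the same statement holds for $n_t-h\in U$ with phase $1-\delta_{n_t}$. This is what replaces your unproved $D/U$ symmetry: one counts right endpoints of $U$-occurrences with the same region, then shifts by at most $m$.

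\emph{What the lemma buys.} Membership of $n\in\Scal_t$ in $D_\ell(m)$ then depends, off a small set, only on the limit point of $(a/F_t,b/F_t)$ in $\Tcal$. Hence the normalized layer count converges, uniformly in $s$, to the area $V_{\ell,m}(s)$ of a Jordan-measurable region. Summing over even layers, and using that the vertical sections of $L(\Tcal)$ at scales $v\phi^{-4j}$ tile $(0,\Theta(v))$, gives $\mathcal P_{\ell,m}(\lambda)=\lambda^{-1}\int_0^\lambda g_{\ell,m}(\Theta(v))\,dv$. Continuity and $\phi^4$-periodicity are then immediate. Without this, your densities $\rho^{\pm}_{\ell,m}$ are asserted rather than derived.

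\emph{On your Weyl idea.} A route through Weyl equidistribution of $\{\phi n\}$ could work. It requires first proving that, outside a set of density zero, $n\in D$ exactly when $\{\phi n\}<\Theta(n/F_{t_*(n)}^2)$. That statement is essentially the layer-coding lemma combined with the tiling of $(0,\Theta(v))$, so it does not avoid the missing step.
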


It follows that $D_\ell(m)$ and $U_\ell(m)$ have equal lower densities and equal upper densities. Their natural densities exist if and only if $\mathcal P_{\ell,m}$ is constant. Specifically, when they exist, they are equal. Their logarithmic densities always exist and are equal. For $\ell=1$, we obtain the exact logarithmic densities.
\begin{corollary}
\label{cor:first-density}
For $m\notin D_1$, $D_1(m)=U_1(m)=\varnothing$. For every $m\in D_1$, both sets have positive lower density but no
natural density, and their logarithmic densities are equal. Let $\mathfrak d_m=\delta_{\log}(D_1(m))=\delta_{\log}(U_1(m))$ and $\phi=(1+\sqrt5)/2$. Then
\[
\begin{array}{c|c}
m&4\mathfrak d_m\log\phi\\ \hline
1&2\log\sqrt5+(\sqrt5-5)\log\phi\\
2&-2\log\sqrt5+(11-3\sqrt5)\log\phi\\
3&-\log\sqrt5+\frac{5\sqrt5-7}{2}\log\phi\\
5&\log\sqrt5-\phi\log\phi
\end{array}.
\]
\end{corollary}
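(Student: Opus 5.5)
The plan is to derive Corollary~\ref{cor:first-density} from Theorem~\ref{thm:phase-density} together with an explicit description of the first-order gaps inside each block $\Scal_t$.

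\textbf{Step 1: structure of each block.} The first assertion is immediate from the definitions and the result of Chung, Graham and Spiro that $D_1=U_1=\{1,2,3,5\}$: if $m\notin D_1$, no consecutive pair has gap $m$.

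Next we study the geometry of $\Scal_t$. It is the image of the triangle $1\le a\le b\le F_t$ under the map $(a,b)\mapsto aF_t+bF_{t-1}$. Its elements lie in the interval $[F_{t+1},F_t(F_t+F_{t-1})]=[F_{t+1},F_tF_{t+1}]$, which has length about $F_t^2\phi$. Consecutive blocks $\Scal_t$ and $\Scal_{t+1}$ overlap, and they have opposite parity. So near $n\approx \lambda F_t^2$ with $\lambda$ fixed, the integers split between the types $D$ and $U$ according to local densities. These densities are computed from the number of lattice points $(a,b)$ in the triangle with $aF_t+bF_{t-1}\in[n,n+h]$.

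For fixed $(a,b)$ direction, the residue pattern $n\bmod F_{t-1}$ approximately equidistributes. So the local picture is the following. Integers of block $t$ near scale $x=n/F_t^2$ occur with density $\rho_t(x)$, equal to the length of the segment of admissible $b$ divided by $F_t$; this is a piecewise linear function of $x$. Block $t+1$ contributes density $\rho_{t+1}$ at the rescaled point $x/\phi^2$.

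\textbf{Step 2: local gap frequencies.} Fix a local configuration, namely which blocks are present with densities $p$ (for $D$) and $q$ (for $U$). Since consecutive $D$-elements differ by $1,2,3,5$, I expect the local $D$-pattern to be a Sturmian-type word determined by $\alpha=p$, a rotation by an irrational-like slope. Then the frequency of gap $m$ among $D$-elements is an explicit piecewise linear function $g_m(p)$, given by the standard three-gap formula for rotations.

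This yields
\[
\mathcal P_{1,m}(\lambda)=\frac{1}{\lambda}\int_0^\lambda g_m(p(x))\,p(x)\,dx
\]
up to the geometric sum over the blocks $t_*, t_*-2,\ldots$, which produces the $\phi^4$-periodicity.

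\textbf{Step 3: logarithmic density and non-existence of natural density.} Logarithmic density is the average of $\mathcal P$ against $d\lambda/\lambda$ over a period. Averaging over a full multiplicative period collapses the sum over $t$, and it gives
\[
\mathfrak d_m=\frac{1}{4\log\phi}\int g_m(p(x))\,p(x)\,\frac{dx}{x}
\]
taken over one block. The integrand is piecewise rational in $x$, with breakpoints at powers of $\phi$ and at $\sqrt5$-related values coming from the corners of the triangle: $b=F_t$ gives $x$ near $\phi^{-1}$, and $a=b$ gives the endpoint $\phi^2/\sqrt5$ after normalizing $F_t^2\approx\phi^{2t}/5$. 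Integrating $1/x$ against linear pieces produces the $\log\sqrt5$ and $\log\phi$ terms of the table. The equality $\delta_{\log}(D_1(m))=\delta_{\log}(U_1(m))$ follows because the $U$-profile is the $\phi^2$-shift of the $D$-profile, and a shift does not change the average over a period.

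Positive lower density follows from $\mathcal P_{1,m}>0$ on $[1,\phi^4]$ by continuity and compactness. For each $m$ it suffices to find one phase where gap $m$ occurs with positive frequency. The absence of a natural density follows by showing that $\mathcal P_{1,m}$ is nonconstant, for instance by comparing its values at two explicit phases, or by noting that its derivative is nonzero where only one block is present.

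\textbf{Main obstacle.} The hardest step is Step 2: justifying the Sturmian description of gaps where two blocks of the same type overlap, and getting the exact $g_m$. I would first use Theorem~\ref{thm:phase-density}'s existence of $\mathcal P_{1,m}$, then compute the gap statistics directly from the explicit $(a,b)$-ordering within $\Scal_t$. Consecutive elements differ by combinations like $F_t-F_{t-1}=F_{t-2}$ reduced modulo interleaving, so I would check the table against small-$t$ numerics.
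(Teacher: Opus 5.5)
Your Step 2 carries the whole corollary, and it has a genuine gap: you never identify what determines the local pattern of types, and the model you guess is wrong. The paper's mechanism is that the canonical coordinates fix the phase. For $n$ in an even block, $\delta_n=\{\phi n\}=\phi^{-t}(\phi b-a)$, and $\delta_{n+h}=\{\delta_n+h\phi^{-1}\}$. Lemma~\ref{lem:layer-coding} then shows that at normalized scale $u=n/F_t^2$ one has $n+h\in D$ iff $\{\delta_n+h\phi^{-1}\}<\Theta(u)$.

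So the local word is the coding of the rotation by the \emph{fixed} angle $\phi^{-1}$ through the window $[0,\theta)$, with $\theta=\Theta(u)$. It is not a Sturmian word of slope $p$. Such a word has only the two gaps $\lfloor 1/p\rfloor,\lceil 1/p\rceil$, whereas for $\alpha<\theta<\phi^{-2}$ the gaps $2,3,5$ all occur with positive frequency (Lemma~\ref{lem:return-map}(i)). The three-gap theorem you cite is the right tool, but only once the angle $\phi^{-1}$ has been derived, and equidistribution of $n \bmod F_{t-1}$ does not give it.

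With the correct model, $g_{1,m}(\theta)$ is the length of the first-return cell $\{z\in[0,\theta):r_\theta(z)=m\}$. It is computed from $2\phi^{-1}=1+\phi^{-3}$, $3\phi^{-1}=2-\phi^{-4}$, $5\phi^{-1}=3+\phi^{-5}$, and is affine on $[\alpha,\phi^{-2}]$, $[\phi^{-2},\phi^{-1}]$, $[\phi^{-1},\beta]$. The table then follows by integrating $g_{1,m}(\theta)/(\theta(1-\theta))$ over $[\alpha,\beta]$ (Corollary~\ref{cor:density-consequences}). Your proposal does neither computation and defers the values to numerics, so none of the four entries is established.

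The density bookkeeping also needs repair. At a given scale, it is not just $\Scal_t$ and $\Scal_{t+1}$ that overlap: all same-parity blocks $\Scal_{t+2j}$, $j\in\mathbb Z$, contribute to $D$ near $n$, and their number grows with $n$. Inside one block, the phases of its elements fill only the subinterval $\mathcal J(u)$ of $(0,\Theta(u))$, so a single block does not have gap frequency $g_m(p)$. Only after summing over all same-parity blocks do these sections tile $(0,\Theta(v))$ and produce the integrand $g_{1,m}(\Theta(v))$. Your $p(x)$, defined as the single-block length of admissible $b$, equals $|\mathcal J(x)|$, not $\Theta(x)$.

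Finally, your argument for the absence of a natural density, via a nonzero derivative ``where only one block is present'', cannot work, because that situation never arises. The paper instead uses Corollary~\ref{cor:density-consequences}: a natural density exists iff $g_{1,m}$ is constant on $J$. The explicit table shows each $g_{1,m}$ is nonconstant and not identically zero, which also gives positive lower density.
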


\section{Two preliminary lemmas}
Throughout the paper, let $\phi=(1+\sqrt5)/2$. For $s\geq2$, the Fibonacci recurrence shows that 
\begin{equation}\label{eq:fib-phi-identities}
\phi F_s+F_{s-1}=\phi^s,\quad
F_s-\phi F_{s-1}=(-1)^{s+1}\phi^{1-s}
\quad\text{and}\quad
\phi F_s-F_{s+1}=(-1)^{s+1}\phi^{-s}.
\end{equation}
Given a real number $x$, we write $\{x\}=x-\lfloor x\rfloor$. For $n\geq1$, let $\delta_n=\{\phi n\}=\{\phi^{-1}n\}$. If $n=aF_t+bF_{t-1}$ is the canonical representation of $n\geq2$, then define
\[
\eps_n=
\begin{cases}
\delta_n,&\text{if $t$ is even},\\
1-\delta_n,&\text{if $t$ is odd}.
\end{cases}
\]
Since $0<\phi^{-t}(\phi b-a)<1$ and $\phi n=aF_{t+1}+bF_t+(-1)^t\phi^{-t}(\phi b-a)$, we have $\eps_n=\phi^{-t}(\phi b-a)$. The first lemma determines the coefficients
associated with a fixed index.
\begin{lemma}\label{lem:candidate}
For $s,n\geq2$, define $e_s(n)=\{(-1)^s\phi n\}$ and $v_s(n)=\phi n+(-1)^{s+1}e_s(n)$.\\
$(i)$ The system 
\[
\begin{pmatrix}
F_s&F_{s-1}\\
F_{s+1}&F_s
\end{pmatrix}
\binom{a_s(n)}{b_s(n)}
=
\binom{n}{v_s(n)}
\]
has the unique integer solution $a_s(n)=\phi^{1-s}n-F_{s-1}e_s(n)$ and $b_s(n)=\phi^{-s}n+F_se_s(n)$. Moreover, $\tau(n)=s$ if and only if $1\leq a_s(n)\leq b_s(n)\leq F_s$.\\
$(ii)$ Let $q_s=F_{s-1}/F_s, \kappa_s=F_s\phi^{-s}$ and $\Tcal=\{(A,B):0\leq A\leq B\leq1\}$. For $A_s=a_s(n)/F_s$ and $B_s=b_s(n)/F_s$, define $L_s(A,B)=\bigl(A+q_sB,\ \kappa_s(\phi B-A)\bigr)$.
Then
\[
L_s(A_s,B_s)=\left(\frac{n}{F_s^2},e_s(n)\right).
\]
The maps $L_s$ converge uniformly on $\Tcal$ to the invertible map
\[
L(A,B)=\left(A+\phi^{-1}B,\frac{\phi B-A}{\sqrt5}\right).
\]
\end{lemma}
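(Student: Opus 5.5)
The plan is to verify Lemma~\ref{lem:candidate} by direct algebra with the identities \eqref{eq:fib-phi-identities}, using the determinant $F_s^2-F_{s-1}F_{s+1}=(-1)^{s+1}$ (Cassini) for uniqueness.

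\emph{Part (i).} The matrix has determinant $\pm1$, so the system has a unique real solution, and that solution is integral exactly when $n$ and $v_s(n)$ are integers. For $v_s(n)$, write $x=(-1)^s\phi n$, so that $e_s(n)=\{x\}$. If $s$ is even, $v_s(n)=\phi n-\{\phi n\}=\lfloor\phi n\rfloor$. If $s$ is odd, $v_s(n)=\phi n+\{-\phi n\}=\lceil\phi n\rceil$, since $\phi n$ is irrational. Both are integers.

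Next I check that the stated formulas satisfy the system. In the first row, $F_s a+F_{s-1}b=n(F_s\phi^{1-s}+F_{s-1}\phi^{-s})+e(-F_sF_{s-1}+F_{s-1}F_s)$. The $e$-terms cancel, and $F_s\phi^{1-s}+F_{s-1}\phi^{-s}=\phi^{-s}(\phi F_s+F_{s-1})=1$ by the first identity. In the second row, $F_{s+1}a+F_s b=n\phi^{-s}(\phi F_{s+1}+F_s)+e(F_s^2-F_{s+1}F_{s-1})$. This equals $n\phi^{-s}\phi^{s+1}+(-1)^{s+1}e=\phi n+(-1)^{s+1}e=v_s(n)$.

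For the equivalence, suppose first that $\tau(n)=s$ with canonical $(a,b)$. The preceding paragraph gives $\phi n=aF_{s+1}+bF_s+(-1)^s\eps_n$ with $0<\eps_n<1$. Hence $aF_{s+1}+bF_s$ equals $\lfloor\phi n\rfloor$ for even $s$ and $\lceil\phi n\rceil$ for odd $s$, which is $v_s(n)$ in both cases. By uniqueness, $(a,b)=(a_s(n),b_s(n))$, and these satisfy the inequalities. Conversely, if $1\le a_s\le b_s\le F_s$, then $n=a_sF_s+b_sF_{s-1}\in\Scal_s$, so $\tau(n)=s$ by the partition property.

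\emph{Part (ii).} Divide the first row by $F_s^2$: $A_s+q_sB_s=n/F_s^2$. For the second coordinate, compute $\phi b_s-a_s=n(\phi^{1-s}-\phi^{1-s})+e(\phi F_s+F_{s-1})=e\,\phi^s$. Therefore $\kappa_s(\phi B_s-A_s)=\phi^{-s}(\phi b_s-a_s)=e_s(n)$. For uniform convergence on the compact set $\Tcal$, the maps $L_s$ are linear with coefficients $q_s\to\phi^{-1}$ and $\kappa_s\to1/\sqrt5$ by Binet's formula, and $A,B$ are bounded on $\Tcal$. The limit map $L$ has determinant $(\phi+\phi^{-1})/\sqrt5=1\neq0$, so it is invertible.

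The only delicate point is the sign and parity bookkeeping in the identification $v_s(n)=\lfloor\phi n\rfloor$ or $\lceil\phi n\rceil$, together with its match to the sign $(-1)^s\eps_n$. Everything else is routine.
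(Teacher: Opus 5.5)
Your proof is correct and follows essentially the same route as the paper. The steps match: Cassini's determinant together with integrality of $v_s(n)$ gives the unique integer solution; the expansion $\phi n=aF_{s+1}+bF_s+(-1)^s\eps_n$ with $0<\eps_n<1$ identifies the canonical pair with $(a_s(n),b_s(n))$; the partition of $\{2,3,\ldots\}$ into the sets $\Scal_t$ gives the converse; and part (ii) is a direct computation plus $q_s\to\phi^{-1}$, $\kappa_s\to1/\sqrt5$. The differences are cosmetic: you verify the formulas by substitution and spell out $v_s(n)\in\{\lfloor\phi n\rfloor,\lceil\phi n\rceil\}$, while the paper re-derives $0<\eps_n<1$ from $0<\phi b-a\le\phi F_s-1<\phi^s$ rather than citing the remark before the lemma.
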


\begin{proof}
Since the determinant of the displayed matrix is $F_s^2-F_{s-1}F_{s+1}=(-1)^{s+1}$ and both $n$ and $v_s(n)=\phi n+(-1)^{s+1}e_s(n)$ are integers, the system has a unique integer solution. By (\ref{eq:fib-phi-identities}) and solving the equations $F_sa_s(n)+F_{s-1}b_s(n)=n$ and $F_{s+1}a_s(n)+F_sb_s(n)=v_s(n)$, we obtain $a_s(n)=\phi^{1-s}n-F_{s-1}e_s(n)$ and $b_s(n)=\phi^{-s}n+F_se_s(n)$.

Assume first that $\tau(n)=s$, and write $n=aF_s+bF_{s-1}$, where $1\leq a\leq b\leq F_s$. Let $\eta=\phi^{-s}(\phi b-a)$. Then $0<\eta<1$, since $0<\phi b-a\leq\phi F_s-1<\phi^s$. Moreover, $\phi n=aF_{s+1}+bF_s+(-1)^s\eta$. It follows that $v_s(n)=aF_{s+1}+bF_s$. Thus $(a,b)$ satisfies the
displayed system, and hence $a_s(n)=a$ and $b_s(n)=b$. Conversely, if $1\leq a_s(n)\leq b_s(n)\leq F_s$, then $n=a_s(n)F_s+b_s(n)F_{s-1}$ is the canonical representation of $n$. Hence $\tau(n)=s$.

The first coordinate of $L_s(A_s,B_s)$ is $A_s+q_sB_s=n/F_s^2$. The second follows from $\phi F_s+F_{s-1}=\phi^s$. Since $q_s$ converges to $\phi^{-1}$
and $\kappa_s$ converges to $1/\sqrt5$, the convergence is uniform on $\Tcal$. Finally, $\det L=(\phi+\phi^{-1})/\sqrt5=1$.
\end{proof}

Let $\alpha=1/(1+\phi^2)=\phi^{-1}/\sqrt5$, $\beta=\phi^2/(1+\phi^2)=\phi/\sqrt5$, $J=[\alpha,\beta]$ and $\Tcal^\circ=\{(A,B):0<A<B<1\}$. Since
\[
L^{-1}(\xi,e)=\left(\frac{\phi\xi}{\sqrt5}-\phi^{-1}e,
\frac{\xi}{\sqrt5}+e\right),
\]
$L^{-1}(\xi,e)\in\Tcal^\circ$ if and only if
\begin{equation}\label{eq:triangle-criterion}
\frac{\xi}{\sqrt5\phi^2}<e<
\min\left\{\frac{\phi^2\xi}{\sqrt5},
1-\frac{\xi}{\sqrt5}\right\}.
\end{equation}
The corresponding criterion for $L^{-1}(\xi,e)\in\Tcal$ is obtained by
replacing both strict inequalities with non-strict inequalities.
For $u>0$, choose the unique $r\in\mathbb Z$ such that $\xi=\phi^{4r}u\in[\phi^{-3},\phi)$, and define
\begin{equation}\label{eq:Theta}
\Theta(u)=\min\left\{\frac{\phi^2\xi}{\sqrt5},
1-\frac{\xi}{\sqrt5}\right\}.
\end{equation}
Equivalently,
\[
\Theta(u)=
\begin{cases}
\phi^2\xi/\sqrt5,&\phi^{-3}\leq\xi\leq\phi^{-1},\\
1-\xi/\sqrt5,&\phi^{-1}\leq\xi<\phi.
\end{cases}
\]
Thus $\Theta$ is continuous and has range $J$. Furthermore, it satisfies
$\Theta(\phi^4u)=\Theta(u)$ and
$\Theta(\phi^{-2}u)=1-\Theta(u)$.

\begin{lemma}\label{lem:layer-coding}
Let $(n_t)$ be a sequence of integers with $n_t\geq2$, indexed by integers $t\to\infty$ of fixed parity. Assume that $n_t/F_t^2\to u>0$. Let $\delta_{n_t}\to z$ when $t$ is even and $1-\delta_{n_t}\to z$ when $t$ is odd. Fix $h\in\mathbb Z$ and set $p_h=\{z+h\phi^{-1}\}$, with $p_h\notin\{0,\Theta(u)\}$. For all sufficiently large
$t$,
\begin{align}
t\text{ even}:\quad&
n_t+h\in D\quad\text{if and only if}\quad p_h<\Theta(u),\label{eq:layer-code-D}\\
t\text{ odd}:\quad&
n_t-h\in U\quad\text{if and only if}\quad p_h<\Theta(u).
\label{eq:layer-code-U}
\end{align}
\end{lemma}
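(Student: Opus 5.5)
The plan is to locate $\tau(n)$ for $n=n_t+h$ (and $n=n_t-h$ for odd $t$) using the criterion of Lemma~\ref{lem:candidate}. For each $s$, $\tau(n)=s$ holds exactly when $L_s^{-1}(n/F_s^2,e_s(n))\in\Tcal$. We then pass to the limit $L$ and use the triangle criterion (\ref{eq:triangle-criterion}). Since the sets $\Scal_s$ partition $\{2,3,\dots\}$, exactly one $s$ works. So it suffices to show two things: when $p_h<\Theta(u)$, no $s$ of parity opposite to $t$ can work; when $p_h>\Theta(u)$, no $s$ of the same parity can work. Arguing by exclusion rather than membership has a purpose: the tile boundaries $u\phi^{4k}/(\sqrt5\phi^2)$ below $\Theta(u)$ are not excluded by the hypothesis, and this way we never have to decide which of two adjacent same-parity layers contains $n$.

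\emph{Step 1 (limits of the coordinates).} Take $t$ even and $n=n_t+h$. Since $\{\phi n\}=\{\phi^{-1}n\}$, we have $\delta_{n}=\{\delta_{n_t}+h\phi^{-1}\}$. Because $p_h\neq0$, no wrap-around occurs in the limit, so $\delta_n\to p_h$. Hence $e_s(n)\to p_h$ for even $s$ and $e_s(n)\to1-p_h$ for odd $s$. Also $n/F_s^2=\xi_s+o(1)$ with $\xi_s=u\phi^{2(t-s)}$, uniformly for $s$ in any range $s\geq t-C$. Odd $t$ is handled in the same way with $n_t-h$: then $\delta_n=\{\delta_{n_t}-h\phi^{-1}\}$, and $1-\delta_{n_t}\to z$ gives $1-\delta_n\to p_h$, so the roles of the two parities of $s$ are swapped.

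\emph{Step 2 (tiling).} Fix a parity class of $s$; then $\xi_s$ runs over $v\phi^{4k}$, $k\in\mathbb Z$, with $v=u$ or $v=\phi^2u$. For each $k$, (\ref{eq:triangle-criterion}) gives an admissible $e$-interval
\[
I(v\phi^{4k})=\left(\frac{v\phi^{4k}}{\sqrt5\phi^2},\ \min\left\{\frac{\phi^2 v\phi^{4k}}{\sqrt5},1-\frac{v\phi^{4k}}{\sqrt5}\right\}\right).
\]
The upper end for $\xi$ equals $\phi^2\xi/\sqrt5$ while $\xi\leq\phi^{-1}$, and this equals the lower end for $\phi^4\xi$. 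So consecutive intervals abut. Using $1+\phi^2=\sqrt5\phi$, the interval is empty once $\xi>\phi$. Therefore the closed intervals tile $[0,\Theta(v)]$, the top tile being the one with $\xi\in[\phi^{-3},\phi)$. Combined with $\Theta(\phi^{-2}u)=1-\Theta(u)$ and $\phi^4$-periodicity, this gives $\Theta(\phi^2u)=1-\Theta(u)$. So an $s$ of the same parity as $t$ needs roughly $p_h\leq\Theta(u)$, and an $s$ of opposite parity needs roughly $1-p_h\leq1-\Theta(u)$, i.e.\ $p_h\geq\Theta(u)$.

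\emph{Step 3 (uniform error control).} Lemma~\ref{lem:candidate}(ii) says $L_s\to L$ uniformly on $\Tcal$. Hence the closed condition $L_s^{-1}(n/F_s^2,e_s(n))\in\Tcal$ implies $(\xi_s,e)$ lies within $o(1)$ of $L(\Tcal)$, with the $o(1)$ uniform in $s\geq t-C$. Any $s<t-C$ has $n/F_s^2>2\phi$ for $C$ large, so it is excluded outright because $A+q_sB\leq2$. Suppose $p_h<\Theta(u)-\eta$ for a fixed margin $\eta>0$. Then for large $t$, every opposite-parity $s$ would need $1-p_h\leq1-\Theta(u)+o(1)$, which fails. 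So $\tau(n)$ has the same parity as $t$, and $n\in D$ (resp.\ $n\in U$ for odd $t$). The case $p_h>\Theta(u)$ is symmetric. The main obstacle I expect is making this uniformity precise over infinitely many $s$. For $s\to\infty$, however, $\xi_s\to0$ and the admissible intervals shrink toward $0$, so they cannot reach $p_h$ or $1-p_h$, both of which are bounded away from $0$ and $1$. This reduces the check to finitely many $s$, where the uniform convergence of $L_s$ to $L$ suffices.
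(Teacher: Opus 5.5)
Your proposal is correct and follows essentially the same route as the paper: you pass to the limit map $L$ via Lemma~\ref{lem:candidate}, use the tiling of $[0,\Theta(u)]$ by the sections from the triangle criterion (with $\Theta(\phi^{2}u)=1-\Theta(u)$ for the opposite parity), and bound $\tau(n)-t$ in two directions. From below you use the size bound $A+q_sB\le 2$; from above you use that $n/F_s^2\to0$ forces $e_s(n)\to0$, which contradicts $0<p_h<1$. The difference is only in presentation: you argue by uniform closeness of $(n/F_s^2,e_s(n))$ to the closed set $L(\Tcal)$ over $s\ge t-C$, whereas the paper extracts subsequences on which $\tau(n_t+h)-t$ is constant; in your last step it is $s-t\to\infty$, not merely $s\to\infty$, that makes the sections shrink, and your uniform-closeness statement already covers this.
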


\begin{proof}
Suppose first that $t$ is even. Since $\delta_{n_t+h}=\{\delta_{n_t}+h\phi^{-1}\}$ and $p_h\neq0$, we have $\delta_{n_t+h}\to p_h$. For every fixed $j\in\mathbb Z$, $(n_t+h)/F_{t+2j}^2\longrightarrow\phi^{-4j}u$. If $\tau(n_t+h)=t+2j$ along a subsequence, let $(A_t,B_t)\in\Tcal$ be the corresponding normalized canonical coefficients. We may assume that $(A_t,B_t)\to(A,B)\in\Tcal$. By Lemma~\ref{lem:candidate} and the uniform convergence $L_{t+2j}\to L$, $L(A,B)=(\phi^{-4j}u,p_h)$. Hence $L^{-1}(\phi^{-4j}u,p_h)\in\Tcal$. By the non-strict version of \eqref{eq:triangle-criterion}, we have 
\begin{equation}\label{eq:closed-candidate}
\frac{\phi^{-4j-2}u}{\sqrt5}\leq p_h\leq
\min\left\{\frac{\phi^{2-4j}u}{\sqrt5},
1-\frac{\phi^{-4j}u}{\sqrt5}\right\}.
\end{equation}
Choose $r\in\mathbb Z$ such that
$\xi=\phi^{4r}u\in[\phi^{-3},\phi)$, and set
$x_j=\phi^{-4j}u=\phi^{-4(j+r)}\xi$. Since $x_j/(\sqrt5\phi^2)\leq1-x_j/\sqrt5$ holds precisely when $x_j\leq\phi$, the interval in \eqref{eq:closed-candidate} is empty for $j<-r$, except when $j=-r-1$ and $\xi=\phi^{-3}$, where it reduces to $\{\alpha\}$. For $j=-r$, its upper endpoint is $\Theta(u)$. For $j>-r$, we have $x_j<\phi^{-3}$ and $p_h\leq\phi^2x_j/\sqrt5<\alpha\leq\Theta(u)$. Therefore, if $\tau(n_t+h)=t+2j$ for infinitely many $t$, then $p_h\leq\Theta(u)$. If $\tau(n_t+h)=t+2j+1$ for infinitely many $t$, the limiting normalized size and phase are $\phi^{-4j}(\phi^{-2}u)$ and $1-p_h$, respectively. By the same argument, with $\phi^{-2}u$ in place of $u$, we have $1-p_h\leq\Theta(\phi^{-2}u)=1-\Theta(u)$. Hence $p_h\geq\Theta(u)$.

It remains to prove that $k_t=\tau(n_t+h)-t$ is bounded. Suppose first that $k_t=-r_t$ on a subsequence, where $r_t\to\infty$. If $t-r_t$ is bounded, then $n_t+h\leq F_{t-r_t}F_{t-r_t+1}\leq2F_{t-r_t}^2$, contrary to the unboundedness of $n_t$. We may therefore assume that $t-r_t\to\infty$. For all sufficiently large $t$,
\[
\frac{n_t+h}{F_{t-r_t}^2}
\geq\frac{u}{2}\left(\frac{F_t}{F_{t-r_t}}\right)^2
\geq\frac{u}{2}F_{r_t+1}^2,
\]
again contrary to the canonical bound.

Suppose that $(k_t)$ is unbounded above, and choose a subsequence with $k_t\to\infty$. Set $s_t=t+k_t$. Let $n_t+h=a_tF_{s_t}+b_tF_{s_t-1}, A_t=\frac{a_t}{F_{s_t}}$ and $B_t=\frac{b_t}{F_{s_t}}$. Then $x_t=(n_t+h)/F_{s_t}^2=O\!\left(F_{k_t+1}^{-2}\right)=o(1)$. Since $x_t=A_t+q_{s_t}B_t$, where $A_t,B_t\geq0$ and $\inf_t q_{s_t}>0$, it follows that $A_t,B_t\to0$. So $e_{s_t}(n_t+h)=\kappa_{s_t}(\phi B_t-A_t)\longrightarrow0$. Restricting to one of the two parities of $s_t$ yields $e_{s_t}(n_t+h)\longrightarrow p_h$  or $e_{s_t}(n_t+h)\longrightarrow1-p_h$, which contradicts $0<p_h<1$. Hence $(k_t)$ is bounded above, and thus $(k_t)$ is bounded.

Assume that the equivalence in \eqref{eq:layer-code-D} fails infinitely often. Then there is a subsequence on which $k_t$ is constant and has the wrong
parity. The normalized canonical coefficients have a convergent subsequence in $\Tcal$ and its limit satisfies the closed triangle criterion. It follows that
\[
\begin{array}{ll}
k_t\equiv0\pmod2:& n_t+h\in D,\quad p_h\leq\Theta(u),\\
k_t\equiv1\pmod2:& n_t+h\notin D,\quad p_h\geq\Theta(u).
\end{array}
\]
Since $p_h\neq\Theta(u)$, these inequalities are strict in the required directions, contradicting our assumption. Hence \eqref{eq:layer-code-D} holds.

Now suppose that $t$ is odd. Set $z_t=1-\delta_{n_t}$. Since $p_h\neq0$, $1-\delta_{n_t-h}=\{z_t+h\phi^{-1}\}\longrightarrow p_h$. By applying the same argument to $n_t-h$, we obtain $n_t-h\in U$ if and only if $p_h<\Theta(u)$ for all sufficiently large $t$, as $\tau(n_t-h)-t$ is even precisely when $\tau(n_t-h)$ is odd. This proves \eqref{eq:layer-code-U}.
\end{proof}
\medskip
\section{Proof of Theorem \ref{thm:small-spectra}}
Let
\[
\begin{aligned}
\Gamma_1&=\{1,2,3,5\},&
\Gamma_2&=\{2,3,4,5,6,8,10\},\\
\Gamma_3&=\{3,4,5,6,7,8,10,11,13\},&
\Gamma_4&=\{5,6,7,8,10,11,13,16,18\}.
\end{aligned}
\]
For every $n\geq2$, we have $0<\eps_n\leq\phi^{-\tau(n)}\bigl(\phi F_{\tau(n)}-1\bigr)<\beta$, since $\beta=\phi/\sqrt5$ and $\beta\phi^t-(\phi F_t-1)=1+(-1)^t\phi^{1-t}/\sqrt5>0$ for $t\ge2$. Therefore,
\begin{equation}\label{eq:phase-bounds}
\begin{array}{ll}
n\in D:&0<\delta_n<\beta,\\
n\in U:&\alpha<\delta_n<1.
\end{array}
\end{equation}

\begin{lemma}\label{lem:exact-shift}
For $r\in\{5,6,7,8\}$, let $\Lambda_r=(\phi^{-1}+\phi^{2-r})/\sqrt5$. Suppose that $n=aF_t+bF_{t-1}$ is canonical, $t\geq r-1$, and
$\eps_n>\Lambda_r$. If
$N=n+(-1)^{t-r+1}F_{r-1}\geq2$, then $N$ has the same type as
$n$.
\end{lemma}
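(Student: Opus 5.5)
The plan is to compute the canonical data of $N$ directly from that of $n$ using Lemma~\ref{lem:candidate}(i), with exact rather than asymptotic formulas. Throughout, write $c=(-1)^{t-r+1}F_{r-1}$, so that $N=n+c$.

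\emph{Step 1: the phase at index $t$.} Since $(-1)^t c=(-1)^{r+1}F_{r-1}$, the identity $\phi F_{r-1}-F_r=(-1)^r\phi^{1-r}$ from \eqref{eq:fib-phi-identities} gives
\[
(-1)^t\phi c=(-1)^{r+1}F_r-\phi^{1-r}.
\]
The first term is an integer, so
\[
e_t(N)=\{e_t(n)-\phi^{1-r}\}.
\]
Because $\tau(n)=t$, we have $e_t(n)=\eps_n$. The hypothesis $\eps_n>\Lambda_r>\phi^{1-r}$ (to be checked numerically for $r=5,\dots,8$) then gives $e_t(N)=\eps_n-\phi^{1-r}$ with no wrap-around.

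\emph{Step 2: the candidate coefficients at index $t$.} Substituting into $a_t(N)=\phi^{1-t}N-F_{t-1}e_t(N)$ and $b_t(N)=\phi^{-t}N+F_te_t(N)$ gives exact increments:
\[
a_t(N)-a=\phi^{1-t}c+F_{t-1}\phi^{1-r},\qquad b_t(N)-b=\phi^{-t}c-F_t\phi^{1-r}.
\]
By Lemma~\ref{lem:candidate}(i) these are integers. I would evaluate them in closed form using Binet's formula and the identity $F_{t-1}F_{r-1}$-type products, aiming to show that they equal explicit integers such as $\pm F_{t-r}$ or $\pm F_{t-r+1}$, up to sign. This identifies $N$'s candidate pair $(a',b')$ at index $t$ as $(a,b)$ shifted by a fixed Fibonacci vector. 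The constraint $t\geq r-1$ is exactly what makes these indices nonnegative.

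\emph{Step 3: placing $N$ in a layer of the same parity.} If $1\leq a'\leq b'\leq F_t$, then $\tau(N)=t$ and we are done. Otherwise, one of the triangle inequalities fails. I would translate each possible failure ($a'<1$, $a'>b'$, or $b'>F_t$) into conditions on $(a,b)$. Then I would show, using $\eps_n=\phi^{-t}(\phi b-a)>\Lambda_r$, that in each failing case the candidate at index $t-2$ or $t+2$ satisfies the triangle. The bound $\eps_n>\Lambda_r$ should be exactly what rules out the cases in which the only valid index would have the opposite parity, namely $t\pm1$.

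For this it is convenient to work with the normalized criterion \eqref{eq:triangle-criterion}. The new phase is $e=\eps_n-\phi^{1-r}$, and the new size is $N/F_s^2$. The lower bound $e>\xi/(\sqrt5\phi^2)$ at index $t$ or $t\pm2$ should follow from $\eps_n-\phi^{1-r}>\phi^{-1}/\sqrt5+(\phi^{2-r}/\sqrt5-\phi^{1-r})$. Here $\phi^{-1}/\sqrt5=\alpha$ is the smallest admissible phase, and the correction term accounts for the exact error terms $\pm\phi^{1-s}$ in \eqref{eq:fib-phi-identities}.

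The main obstacle is Step 3. The limiting criterion is only asymptotic, so I must track the exact error terms (the $(-1)^{s}\phi^{-s}$ corrections and the difference between $q_t$ and $\phi^{-1}$) to get a statement valid for every $t\geq r-1$, not just for large $t$. I would first try to handle it purely with integers: show directly that $(a',b')$, or its image under the layer-change map $(a,b)\mapsto(b-a,\,a+?)$ that relates index $t$ to $t\pm2$, lies in the triangle. Any leftover small values of $t$ would then be checked by hand.
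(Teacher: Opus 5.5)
\textbf{Verdict: there is a genuine gap.} Your Steps 1 and 2 are correct, and they already produce the right candidate. Carried out, the increments are exactly $F_{t-r}$ and $-F_{t-r+1}$, so $(a',b')=(a+F_{t-r},\,b-F_{t-r+1})$. This is the identity $F_{t-r}F_t-F_{t-r+1}F_{t-1}=(-1)^{t-r+1}F_{r-1}$. For $t=r-1$ one must read $F_{-1}=1$ and $F_0=0$, so the shift is $(1,0)$; your remark that $t\ge r-1$ makes the indices nonnegative is slightly off.

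The whole content of the lemma is Step 3. There your proposal gives no argument, and it also misjudges what has to be shown. The missing observation is that two of the three triangle conditions hold automatically: $a'\ge a\ge1$ and $b'\le b\le F_t$. Only $a'\le b'$ is at risk, and it is equivalent to $b-a\ge F_{s}$ with $s=t-r+2$. This follows directly from the hypothesis, using $\phi b-a=\phi^{-1}b+(b-a)\le\phi^{-1}F_t+(b-a)$. If $b-a\le F_s-1$ and $t\ge r$, Binet gives
\[
\eps_n\le\phi^{-t}\bigl(\phi^{-1}F_t+F_s-1\bigr)=\Lambda_r-\phi^{-t}\Bigl(1+\frac{(-1)^t\phi^{-t-1}+(-1)^s\phi^{-s}}{\sqrt5}\Bigr)<\Lambda_r,
\]
since the bracket is at least $1-(\phi^{-6}+\phi^{-2})/\sqrt5>0$. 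For $t=r-1$, the case $a=b$ gives $\eps_n=a\phi^{-r}\le F_{r-1}\phi^{-r}<\Lambda_r$. Hence $N$ always has canonical index exactly $t$. The failure cases you plan to relocate to index $t\pm2$ never arise. The role of $\eps_n>\Lambda_r$ is not to exclude $t\pm1$; it is to keep $N$ in layer $t$.

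Your proposed normalized route for Step 3 also cannot be rescued by tracking error terms, because the lemma is sharp. Take $r=5$ and $n=88=6F_6+8F_5$. Then $\eps_n\approx0.3870>\Lambda_5=\phi^{-2}\approx0.3820$. Here $N=91=7F_6+7F_5$ lands exactly on the edge $a'=b'$. Worse, at the exact finite-$t$ values the inequality you intended to verify is false. With $\xi=91/64$ you get $e_6(91)\approx0.2411<\xi/(\sqrt5\phi^2)\approx0.2429$, even though $91$ is canonical at index $6$. The criterion \eqref{eq:triangle-criterion} is only a limiting statement. Near such boundary points the correction $(-1)^s\phi^{-s}/\sqrt5$ is a sizable fraction of one unit of $b-a$ when $s$ is small. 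So the integrality of $b-a$, as in the argument above, is indispensable.
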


\begin{proof}
Suppose first that  $t=r-1$. If $a=b$, then $\eps_n=a\phi^{-r}\leq F_{r-1}\phi^{-r}<\Lambda_r$, where
\[
\Lambda_r-F_{r-1}\phi^{-r}
=\frac{\phi^{2-r}+(-1)^{r-1}\phi^{1-2r}}{\sqrt5}>0.
\]
This contradicts $\eps_n>\Lambda_r$. Hence $a<b$, and $N=n+F_t=(a+1)F_t+bF_{t-1}$ is canonical with index $t$.

Next suppose that $t\geq r$. We claim that $b-a\geq F_{t-r+2}$. Otherwise, $\eps_n\leq\phi^{-t}\bigl(\phi^{-1}F_t+F_{t-r+2}-1\bigr)$. Set $s=t-r+2$. Then
\[
\phi^{-t}\bigl(\phi^{-1}F_t+F_s-1\bigr)
=\Lambda_r-\phi^{-t}\left(
1+\frac{(-1)^t\phi^{-t-1}+(-1)^s\phi^{-s}}{\sqrt5}
\right).
\]
Since $t\geq r\geq5$ and $s\geq2$,
\[
1+\frac{(-1)^t\phi^{-t-1}+(-1)^s\phi^{-s}}{\sqrt5}
\geq
1-\frac{\phi^{-6}+\phi^{-2}}{\sqrt5}>0.
\]
It follows that $\eps_n<\Lambda_r$, a contradiction. Define $a'=a+F_{t-r}$ and $b'=b-F_{t-r+1}$. Since $b'-a'=b-a-F_{t-r+2}\geq0$, we have $1\leq a'\leq b'\leq F_t$. By the identity, $F_{t-r}F_t-F_{t-r+1}F_{t-1}=(-1)^{t-r+1}F_{r-1}$, $a'F_t+b'F_{t-1}=n+(-1)^{t-r+1}F_{r-1}=N$. Thus the lemma follows.
\end{proof}

\begin{lemma}\label{lem:finite-realization}
Let $\alpha<\theta<\beta$, $0<z<\theta$, and let $N\geq0$ be an integer. Assume that $\{z+j\phi^{-1}\}\notin\{0,\theta\}$ with $0\leq j\leq N$. Then there exist arbitrarily large integers $n_D$ and $n_U$ such that, for every $0\leq j\leq N$, the integers $n_D+j$ and $n_U+j$ belong to $D$ and $U$, respectively, precisely when $\{z+j\phi^{-1}\}<\theta$.
\end{lemma}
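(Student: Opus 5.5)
The plan is to reduce the statement to Lemma~\ref{lem:layer-coding} with $h=j$. We will build sequences $(n_t)$ with $n_t/F_t^2\to u$ and phase converging to $z$, where $u$ is chosen so that $\Theta(u)=\theta$.

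\emph{Choosing $u$.} Since $\Theta$ is continuous with range $J=[\alpha,\beta]$ and $\alpha<\theta<\beta$, some $u_0\in[\phi^{-3},\phi)$ satisfies $\Theta(u_0)=\theta$. We use $\Theta(\phi^{-2}u)=1-\Theta(u)$ only to keep track of the odd layer; in fact we will apply \eqref{eq:layer-code-U} with the same $u$, so no change of $u$ is needed. For the down case, take even $t$ and integers $n_t$ with $n_t/F_t^2\to u_0$ and $\delta_{n_t}\to z$. For the up case, take odd $t$ and integers $n_t'$ with $n_t'/F_t^2\to u_0$ and $1-\delta_{n_t'}\to z'$. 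Here $z'$ is chosen so that the relevant phases match the target pattern, as explained next.

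\emph{Existence of such integers.} The sequence $\{\phi n\}$ is equidistributed, and in fact dense in every interval $[cF_t^2,(c+\eps)F_t^2]$ for large $t$, since that interval has length tending to infinity and $\phi$ is irrational. So for each $t$ we can pick $n_t$ in $[u_0F_t^2,(u_0+1/t)F_t^2]$ with $|\delta_{n_t}-z|<1/t$. This gives the sequences, and they are arbitrarily large.

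\emph{Applying Lemma~\ref{lem:layer-coding}.} In the even case, $h=j$ and $p_j=\{z+j\phi^{-1}\}$. The hypotheses $p_j\notin\{0,\theta\}$ are assumed, so \eqref{eq:layer-code-D} gives $n_t+j\in D$ iff $p_j<\theta$ for all $0\le j\le N$ once $t$ is large. Since there are finitely many $j$, one common threshold on $t$ works. Set $n_D=n_t$.

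In the odd case, \eqref{eq:layer-code-U} concerns $n_t-h$. We therefore take $h=-j$, so that $n_t+j=n_t-h$, and we need $p_{-j}=\{z'-j\phi^{-1}\}$ to equal $\{z+j\phi^{-1}\}$. This fails for a single $z'$, so the sign issue must be handled with care.

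The alternative is to reverse the block. Let $m=n_t'-N$ and use $h=N-j$, which gives $n_t'-h=m+j$. Then we need $\{z'+(N-j)\phi^{-1}\}=\{z+j\phi^{-1}\}$ for all $j$, which is again impossible in general. So instead we swap the roles using complementarity: $n\in U$ iff $n\notin D$. By the even case, membership of $n+j$ in $U$ is governed by $p_j\ge\Theta$, the opposite condition. To realize "$n+j\in U$ iff $p_j<\theta$", we use odd $t$ directly. Writing $1-\delta_{n+j}=\{(1-\delta_n)-j\phi^{-1}\}$ shows that the phase moves by $-\phi^{-1}$, and \eqref{eq:layer-code-U} in terms of $n_t-h$ exactly compensates for this. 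Concretely, take $n_U=n_t'$ with $1-\delta_{n_t'}\to z$. Then $n_U-h$ has up-phase $\{z+h\phi^{-1}\}$, so the up-pattern is realized on the backward block $n_U-j$.

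The main obstacle is therefore aligning direction, forward versus backward shifts. I would resolve it by replacing $n$ with $-n$ in spirit, using $\{\phi(-n)\}=1-\delta_n$. One checks which orientation the lemma's hypotheses require, possibly by rereading \eqref{eq:layer-code-U} as stated for $n_t-h$ with $h=-j$ and phase $\{z-j\phi^{-1}\}$. Then $z$ is chosen as the given parameter in whichever convention matches, using the symmetry $\Theta(\phi^{-2}u)=1-\Theta(u)$ together with choosing $u=\phi^{-2}u_0$ if needed. Since the pattern condition is invariant under this relabeling once the sign conventions are aligned, the rest is routine.
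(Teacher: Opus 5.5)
\textbf{Gap in the up case.} Your down case is fine. Lemma~\ref{lem:layer-coding} only needs $n_t/F_t^2\to u$ and the phase limit, not $n_t\in\Scal_t$. So choosing $n_t$ by the density of $\{n\phi\}$ in long blocks of consecutive integers is a legitimate substitute for the paper's canonical grid points in $\Tcal^\circ$, and it spares you the paper's perturbation of $z$ and the choice of $q,q'$.

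The up case, however, is not proved. With odd $t$, $h=-j$ and up-phase limit $z'$, Lemma~\ref{lem:layer-coding} gives $n_t+j\in U$ iff $\{z'-j\phi^{-1}\}<\theta$. You abandon this because no $z'$ makes $\{z'-j\phi^{-1}\}=\{z+j\phi^{-1}\}$ for all $j$. But equality of phases is not needed, only equality of the indicators. The missing idea is the reflection $z'=\theta-z$, which lies in $(0,\theta)$. Then $\{z'-j\phi^{-1}\}=\{\theta-p_j\}$ with $p_j=\{z+j\phi^{-1}\}$. For $p_j\notin\{0,\theta\}$ one has $\{\theta-p_j\}<\theta$ iff $p_j<\theta$. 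The same exclusion gives $\{\theta-p_j\}\notin\{0,\theta\}$, so the lemma applies. This is exactly how the paper concludes.

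What you do instead does not prove the statement. Taking $1-\delta_{n_t'}\to z$ realizes the condition on the backward block $n_U-j$, i.e.\ the mirror image of the required word, which is in general a different pattern. For example, $(1,1,1,2,1,2,2)$ reverses to $(2,2,1,2,1,1,1)$. The closing appeal to replacing $n$ by $-n$ ``in spirit'' has no basis, since $D$ and $U$ have no symmetry under $n\mapsto-n$. The assertion that the pattern is invariant under this relabeling is precisely what fails.

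Your complementarity idea can be salvaged, but only with a step you did not supply. Use even layers with $\Theta(u)=1-\theta$, which lies in $(\alpha,\beta)$ because $\alpha+\beta=1$, and use phase limit $z-\theta+1$. The relevant phases are then $\{p_j-\theta\}$, which avoid $\{0,1-\theta\}$. So $n_t+j\in U$ iff $\{p_j-\theta\}\geq1-\theta$ iff $p_j<\theta$.
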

\begin{proof}
Since $\{z+j\phi^{-1}\}\notin\{0,\theta\}$ for $0\leq j\leq N$,
replacing $z$ by a sufficiently close point does not change which of the inequalities $\{z+j\phi^{-1}\}<\theta$ hold. We may therefore assume that neither $z$ nor
$\theta-z$ belongs to $\{\theta\phi^{-4j}:j\geq0\}$. Set $u=\sqrt5\theta/\phi^2$. Then $\phi^{-3}<u<\phi^{-1}$ and $\Theta(u)=\theta$. Choose $q,q'\geq0$ such that $\theta\phi^{-4q-4}<z<\theta\phi^{-4q}$ and $\theta\phi^{-4q'-4}<\theta-z<\theta\phi^{-4q'}$. Since $\theta<\beta$, both $1-\theta\phi^{-4q-2}$ and $1-\theta\phi^{-4q'-2}$ exceed $\theta$. Hence, by \eqref{eq:triangle-criterion}, we have $L^{-1}(\phi^{-4q}u,z), L^{-1}(\phi^{-4q'}u,\theta-z)\in\Tcal^\circ$. By Lemma~\ref{lem:candidate}, applied to coefficient grid approximations of these two points for even and odd $s$, respectively, we obtain canonical integers $x_s,y_s$ such that $x_s/F_{s-2q}^2\to u, \delta_{x_s}\to z, y_s/F_{s-2q'}^2\to u$ and $1-\delta_{y_s}\to\theta-z$. By Lemma~\ref{lem:layer-coding}, with $(t,h)=(s-2q,j)$ and
$(t,h)=(s-2q',-j)$, respectively, $x_s+j\in D$ precisely when
$\{z+j\phi^{-1}\}<\theta$, while $y_s+j\in U$ precisely when
$\{\theta-z-j\phi^{-1}\}<\theta$. These conditions are equivalent
because, for $p_j=\{z+j\phi^{-1}\}\notin\{0,\theta\}$,
$\{\theta-z-j\phi^{-1}\}=\{\theta-p_j\}<\theta$ precisely when
$p_j<\theta$. Since $x_s,y_s\to\infty$, the result follows.
\end{proof}

The following lemma determines the first return times of the rotation $x\mapsto\{x+\phi^{-1}\}$.
\begin{lemma}\label{lem:return-map}
For $\alpha<\theta<\beta$, let $R(x)=\{x+\phi^{-1}\}$ with $x\in[0,1)$, and let $r_\theta(x)$ be the least positive integer $j$ such that $R^j(x)\in[0,\theta)$. For $x\in[0,\theta)$, the following hold.
\begin{enumerate}
\item[(i)] If $\alpha<\theta\leq\phi^{-2}$, then
\[
r_\theta(x)=
\begin{cases}
2,&0\leq x<\theta-\phi^{-3},\\
5,&\theta-\phi^{-3}\leq x<\phi^{-4},\\
3,&\phi^{-4}\leq x<\theta.
\end{cases}
\]

\item[(ii)] If $\phi^{-2}<\theta\leq\phi^{-1}$, then
\[
r_\theta(x)=
\begin{cases}
2,&0\leq x<\theta-\phi^{-3},\\
3,&\theta-\phi^{-3}\leq x<\phi^{-2},\\
1,&\phi^{-2}\leq x<\theta.
\end{cases}
\]

\item[(iii)] If $\phi^{-1}<\theta<\beta$, then
\[
r_\theta(x)=
\begin{cases}
1,&x\in[0,\theta-\phi^{-1})\cup[\phi^{-2},\theta),\\
2,&\theta-\phi^{-1}\leq x<\phi^{-2}.
\end{cases}
\]
\end{enumerate}
\end{lemma}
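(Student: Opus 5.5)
The plan is a direct computation of the orbit of $x$ under $R$. We use $R^j(x)=\{x+j\phi^{-1}\}$ and read off, from the explicit values of $\{j\phi^{-1}\}$, the first $j\geq1$ at which the orbit lands in $[0,\theta)$. The needed values, obtained from $\phi^{-1}+\phi^{-2}=1$, are
\[
\{\phi^{-1}\}=\phi^{-1},\quad
\{2\phi^{-1}\}=2\phi^{-1}-1=\phi^{-3},\quad
\{3\phi^{-1}\}=\phi^{-1}+\phi^{-3}=1-\phi^{-4},
\]
\[
\{4\phi^{-1}\}=2\phi^{-3}=\phi^{-1}-\phi^{-4},\quad
\{5\phi^{-1}\}=1-\phi^{-5}.
\]
We also record the numerical facts $\alpha=\phi^{-1}/\sqrt5\approx0.276$, $\beta\approx0.724$, $\phi^{-2}\approx0.382$, $\phi^{-3}\approx0.236$ and $\phi^{-4}\approx0.146$. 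In particular $\theta>\alpha>\phi^{-3}$, so every interval $[0,\theta-\phi^{-3})$ in the statement is nonempty.

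For each $j$ and each $x\in[0,\theta)$, we determine when $x+\{j\phi^{-1}\}$ is below $1$ and when it lies in $[0,\theta)$ modulo $1$.

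\textbf{$j=1$.} We have $R(x)=x+\phi^{-1}$ if $x<\phi^{-2}$, and $R(x)=x-\phi^{-2}$ otherwise. In the first case $R(x)\geq\phi^{-1}$, and this is $<\theta$ iff $\theta>\phi^{-1}$ and $x<\theta-\phi^{-1}$. In the second case $x\geq\phi^{-2}$, which is possible only when $\theta>\phi^{-2}$, and then $R(x)=x-\phi^{-2}\in[0,\theta)$ always.

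\textbf{$j=2$.} Since $x<\theta<\beta<1-\phi^{-3}$, we get $R^2(x)=x+\phi^{-3}$, and this is $<\theta$ iff $x<\theta-\phi^{-3}$.

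\textbf{$j=3$.} $R^3(x)=x-\phi^{-4}$ if $x\geq\phi^{-4}$, and otherwise $R^3(x)=x+1-\phi^{-4}>\theta$. So the condition is $x\geq\phi^{-4}$.

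\textbf{$j=4$.} $R^4(x)=x+\phi^{-1}-\phi^{-4}$ or that value minus $1$. The first lands in $[0,\theta)$ only if $\theta>\phi^{-1}-\phi^{-4}$. The second only if $x\geq\phi^{-2}+\phi^{-4}$, which needs $\theta>\phi^{-2}+\phi^{-4}$.

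\textbf{$j=5$.} $R^5(x)=x-\phi^{-5}$ once $x\geq\phi^{-5}$.

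Now assemble each case by taking the first $j$ that succeeds.

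\textbf{Case (iii), $\theta>\phi^{-1}$.} $j=1$ succeeds on $[0,\theta-\phi^{-1})\cup[\phi^{-2},\theta)$. On the remaining set $x<\phi^{-2}$, and we need $x<\theta-\phi^{-3}$ for $j=2$. This holds because $\theta-\phi^{-3}>\phi^{-1}-\phi^{-3}=\phi^{-2}$.

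\textbf{Case (ii), $\phi^{-2}<\theta\leq\phi^{-1}$.} $j=1$ succeeds exactly on $[\phi^{-2},\theta)$. For $x<\phi^{-2}$, $j=2$ succeeds iff $x<\theta-\phi^{-3}$. Since $\theta-\phi^{-3}\leq\phi^{-2}$, the leftover set is $[\theta-\phi^{-3},\phi^{-2})$. On it $x\geq\theta-\phi^{-3}>\alpha-\phi^{-3}$, and we must check $x\geq\phi^{-4}$ so that $j=3$ works. I expect this to be the delicate point: it requires $\theta-\phi^{-3}\geq\phi^{-4}$, i.e. $\theta\geq\phi^{-2}$, which holds in this case.

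\textbf{Case (i), $\alpha<\theta\leq\phi^{-2}$.} $j=1$ never succeeds. $j=2$ succeeds on $[0,\theta-\phi^{-3})$. Note $\theta-\phi^{-3}\leq\phi^{-4}$, so $j=3$ succeeds on the rest exactly when $x\geq\phi^{-4}$. On the leftover set $[\theta-\phi^{-3},\phi^{-4})$:
\begin{itemize}
\item $j=4$ fails, because $\theta\leq\phi^{-2}<\phi^{-1}-\phi^{-4}$ and $x<\phi^{-4}<\phi^{-2}+\phi^{-4}$;
\item $j=5$ succeeds provided $x\geq\phi^{-5}$. This follows from $x\geq\theta-\phi^{-3}>\alpha-\phi^{-3}$, where one checks $\alpha-\phi^{-3}=\phi^{-1}/\sqrt5-\phi^{-3}\approx0.040$. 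That is less than $\phi^{-5}\approx0.090$, so this bound alone is \emph{insufficient}.
\end{itemize}

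The main obstacle is this last step of case (i). For $x\in[\theta-\phi^{-3},\phi^{-5})$, we have $R^5(x)=x+1-\phi^{-5}$, which is not in $[0,\theta)$, so $j=5$ genuinely fails there. I would first re-verify the values $\{4\phi^{-1}\}$ and $\{5\phi^{-1}\}$. If they hold, then the stated formula needs $\theta-\phi^{-3}\geq\phi^{-5}$, i.e. $\theta\geq\phi^{-3}+\phi^{-5}$. So I would check whether $\alpha\geq\phi^{-3}+\phi^{-5}\approx0.326$. It is not, since $\alpha\approx0.276$. In that case I would compute $R^j(x)$ for $j=6,7,8$ to see how the orbit continues, and examine how the lemma is used later, where perhaps only $\theta=\Theta(u)$ values with this property arise, before settling the precise statement.
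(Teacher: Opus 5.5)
Your analysis for $j=1,2,3,4$ and your treatment of parts (ii) and (iii) are correct and follow the paper's argument. The proposal breaks down at the step you flagged, and the cause is an arithmetic error, not a defect in the statement. You wrote $\{5\phi^{-1}\}=1-\phi^{-5}$. In fact $5\phi^{-1}=5\phi-5=3+\phi^{-5}$ (numerically $5\phi^{-1}\approx 3.0902$), so $\{5\phi^{-1}\}=\phi^{-5}\approx 0.0902$. In general $F_k\phi^{-1}-F_{k-1}=(-1)^{k+1}\phi^{-k}$, so the sign of the overshoot alternates: $2\phi^{-1}=1+\phi^{-3}$, $3\phi^{-1}=2-\phi^{-4}$, $5\phi^{-1}=3+\phi^{-5}$. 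You had the first two right but flipped the sign at $k=5$.

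With the correct value, $R^5(x)=x+\phi^{-5}$ for every $x<\theta<\beta<1-\phi^{-5}$. On the leftover set $[\theta-\phi^{-3},\phi^{-4})$ of case (i), you then get $R^5(x)<\phi^{-4}+\phi^{-5}=\phi^{-3}<\alpha<\theta$. So the return time there is exactly $5$ for all $\theta\in(\alpha,\phi^{-2}]$, as stated. This is precisely the paper's one-line check. There is no extra condition $\theta\geq\phi^{-3}+\phi^{-5}$, no need to look at $j=6,7,8$, and no need to restrict to special values of $\theta$.

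As written, however, your proposal does not prove case (i). It asserts, incorrectly, that $j=5$ fails on $[\theta-\phi^{-3},\phi^{-5})$, so it has a genuine gap until that value is corrected.
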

\begin{proof}
Since $2\phi^{-1}=1+\phi^{-3}, 3\phi^{-1}=2-\phi^{-4}$ and $5\phi^{-1}=3+\phi^{-5}$,
we have $R^2(x)=x+\phi^{-3}$ and
\[R^3(x)=
\begin{cases}
x+1-\phi^{-4},&x<\phi^{-4},\\
x-\phi^{-4},&x\geq\phi^{-4}.
\end{cases}
\]

For {\rm (i)}, $R(x)\notin[0,\theta)$, while $R^2(x)\in[0,\theta)$ precisely when $x<\theta-\phi^{-3}$ and $R^3(x)\in[0,\theta)$ precisely when $x\geq\phi^{-4}$. On the remaining interval $[\theta-\phi^{-3},\phi^{-4})$,  we see that $R^4(x)=x+2\phi^{-3}>\theta$ and $R^5(x)=x+\phi^{-5}<\phi^{-3}<\theta$. For {\rm (ii)}, $R(x)\in[0,\theta)$ precisely when $x\geq\phi^{-2}$. For $x<\phi^{-2}$, the second iterate returns precisely when $x<\theta-\phi^{-3}$. The remaining interval lies in $[\phi^{-4},\phi^{-2})$, on which $R^3(x)=x-\phi^{-4}\in[0,\theta)$. For {\rm (iii)}, the first iterate returns precisely on $[0,\theta-\phi^{-1})\cup[\phi^{-2},\theta)$. On the complementary interval,
$R^2(x)=x+\phi^{-3}<\phi^{-1}<\theta$. Thus the statement holds.
\end{proof}

\begin{lemma}\label{lem:gap-blocks}
Each of the blocks $\mathbf b_A=(1,1,1,2,1,2,2), \mathbf b_B=(1,2,3,2,3,3)$ and $\mathbf b_C=(2,3,5,3,5,5)$ occurs among consecutive gaps of both $D$ and $U$. It follows that $\Gamma_3\subseteq D_3\cap U_3$ and $\Gamma_4\subseteq D_4\cap U_4$.
\end{lemma}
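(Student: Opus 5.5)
The plan is to realize each block by Lemma~\ref{lem:finite-realization} and to read off the gaps with Lemma~\ref{lem:return-map}. Fix a threshold $\theta\in(\alpha,\beta)$ and a phase $z\in(0,\theta)$, and let $R$ be the rotation of Lemma~\ref{lem:return-map}. Let $0=j_0<j_1<\cdots<j_k$ be the successive indices $j$ with $R^j(z)=\{z+j\phi^{-1}\}\in[0,\theta)$. Then $j_{i+1}-j_i=r_\theta(R^{j_i}(z))$.

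Set $N=j_k$ and assume no $\{z+j\phi^{-1}\}$ with $0\le j\le N$ lies in $\{0,\theta\}$. Lemma~\ref{lem:finite-realization} then gives arbitrarily large $n_D$ such that, among $n_D,n_D+1,\dots,n_D+N$, exactly the integers $n_D+j_i$ lie in $D$. Since $j_0=0$ and $j_k=N$ are both realized, these are consecutive elements of $D$, and their gaps are exactly the return times $j_{i+1}-j_i$. The same statement holds for $U$ with some $n_U$. So it suffices to find, for each block, a $\theta$ and a $z$ whose return-time itinerary is that block.

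The choice of regime is forced by which gap values occur. Block $\mathbf b_A$ uses only gaps $1,2$, so $\theta$ must lie in case (iii), $\phi^{-1}<\theta<\beta$. Block $\mathbf b_B$ uses gaps $1,2,3$, which is case (ii), $\phi^{-2}<\theta\le\phi^{-1}$. Block $\mathbf b_C$ uses gaps $2,3,5$, which is case (i), $\alpha<\theta\le\phi^{-2}$.

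In each regime, Lemma~\ref{lem:return-map} makes the induced map on $[0,\theta)$ an explicit piecewise translation, an interval exchange. For instance, in case (iii) the pieces are $x\mapsto x+\phi^{-1}$ on $[0,\theta-\phi^{-1})$, $x\mapsto x-\phi^{-2}$ on $[\phi^{-2},\theta)$, and $x\mapsto x+\phi^{-3}$ on the middle piece. Requiring the itinerary to be a prescribed word of length $7$ or $6$ is then a finite system of linear inequalities in $(\theta,z)$. Its solution set is a finite intersection of open strips, and I would exhibit one interior point explicitly for each block. An interior point also lets the finitely many boundary coincidences $\{z+j\phi^{-1}\}\in\{0,\theta\}$ be avoided by a small perturbation. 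Concretely, I would first fix a convenient $\theta$, say slightly below $\beta$ for $\mathbf b_A$, and then pull the desired itinerary back through the inverse translations to get a $z$-interval. I expect this search to be the main obstacle: the arithmetic is elementary, but the target word must actually be admissible for the chosen $\theta$, and this has to be checked carefully.

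For the second assertion, note that a block of consecutive gaps $(g_1,\dots,g_k)$ contributes each window sum $g_i+\cdots+g_{i+\ell-1}$ to $D_\ell$ and to $U_\ell$. For $\ell=3$ the window sums are:
\[
\mathbf b_A:\ 3,4,4,5,5,\qquad \mathbf b_B:\ 6,7,8,8,\qquad \mathbf b_C:\ 10,11,13,13.
\]
Their union is $\Gamma_3$. For $\ell=4$ they are:
\[
\mathbf b_A:\ 5,5,6,7,\qquad \mathbf b_B:\ 8,10,11,\qquad \mathbf b_C:\ 13,16,18.
\]
Their union is $\Gamma_4$. Hence $\Gamma_3\subseteq D_3\cap U_3$ and $\Gamma_4\subseteq D_4\cap U_4$.
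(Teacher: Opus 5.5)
Your reduction is the paper's: realize each block with Lemma~\ref{lem:finite-realization}, read the gaps off as successive first-return times from Lemma~\ref{lem:return-map}, and collect window sums. The bookkeeping you write down is correct, and the unions of your window sums are $\Gamma_3$ and $\Gamma_4$. But the argument stops exactly at the step that carries the content of the lemma. You never produce, for any of the three blocks, a pair $(\theta,z)$ with $\alpha<\theta<\beta$ and $0<z<\theta$ whose return-time itinerary is that block. Saying the admissible set is a finite intersection of strips and that you ``would exhibit'' an interior point does not show the set is nonempty, and nonemptiness is not automatic, since not every word over a regime's alphabet is an itinerary. For instance, in case (ii) a return of length $1$ sends $[\phi^{-2},\theta)$ into $[0,\theta-\phi^{-2})\subseteq[0,\phi^{-3})$, where the next return is $2$ or $3$; so $1,1$ never occurs there. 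This, not the alphabet $\{1,2\}$ alone, is what forces case (iii) for $\mathbf b_A$. As written, the proposal is a plan rather than a proof.

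The missing step is easy to supply, and the paper does it with explicit rational witnesses: $(\theta,z)=(2/3,2/5)$, $(3/7,2/5)$, $(2/7,1/50)$. These lie in cases (iii), (ii), (i), respectively. Their return-time partial sums are $0,1,2,3,5,6,8,10$; $0,1,3,6,8,11,14$; and $0,2,5,10,13,18,23$.

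Your own pull-back procedure confirms the first one. With $\theta=2/3$, the itinerary $\mathbf b_A$ cuts out the $z$-interval $[\phi^{-2},\theta-\phi^{-3})$, which contains $2/5$. The orbit $2/5,\ 2/5-\phi^{-2},\ 2/5+\phi^{-3},\ 2/5-\phi^{-4},\ 2/5+\phi^{-5},\ 2/5+\phi^{-5}-\phi^{-2},\ 2/5-\phi^{-6}$ visits pieces with return times $1,1,1,2,1,2,2$.

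Choosing rational $\theta$ and $z$ also disposes of the non-coincidence hypothesis more cleanly than perturbation. For $j\geq1$, $\{z+j\phi^{-1}\}$ is irrational and so differs from the rational numbers $0$ and $\theta$, while $0<z<\theta$ handles $j=0$. Once these witnesses and checks are included, your argument coincides with the paper's.
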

\begin{proof}
By Lemma~\ref{lem:return-map}, the partial sums of the successive return
times for
\[
(\theta,z)=\left(\frac23,\frac25\right),\qquad
\left(\frac37,\frac25\right),\qquad
\left(\frac27,\frac1{50}\right)
\]
are 
\[
0,1,2,3,5,6,8,10;\qquad
0,1,3,6,8,11,14;\qquad
0,2,5,10,13,18,23,
\]
respectively. Their consecutive differences are $\mathbf b_A,\mathbf b_B$, and $\mathbf b_C$. For each pair, $0<z<\theta$. Moreover, $\{z+j\phi^{-1}\}$ is irrational for $j\geq1$, whereas $0$ and $\theta$ are rational. By Lemma~\ref{lem:finite-realization}, each block
occurs among the consecutive gaps of both $D$ and $U$. The sums of three consecutive entries in these blocks are $\{3,4,5\}, \{6,7,8\}, \{10,11,13\}$, and the sums of four consecutive entries are $\{5,6,7\}, \{8,10,11\}, \{13,16,18\}$. This implies that $\Gamma_3\subseteq D_3\cap U_3$ and
$\Gamma_4\subseteq D_4\cap U_4$.
\end{proof}

For a fixed anchor $n$, write $\mathsf D_j$ when $n+j\in D$ and
$\mathsf U_j$ when $n+j\in U$. Consider
\[
\begin{array}{lll}
\mathsf P_1=(\mathsf D_0,\mathsf U_5,\mathsf U_8),&
\mathsf P_2=(\mathsf U_{-3},\mathsf U_0,\mathsf D_5),&
\mathsf P_3=(\mathsf U_0,\mathsf D_3,\mathsf D_5),\\
\mathsf P_4=(\mathsf D_0,\mathsf D_3,\mathsf U_8),&
\mathsf P_5=(\mathsf D_0,\mathsf U_3,\mathsf U_{13}),&
\mathsf P_6=(\mathsf U_0,\mathsf U_2,\mathsf D_{13}).
\end{array}
\]
Let $\operatorname{Occ}(\mathsf P)$ be the set of anchors at which
$\mathsf P$ occurs, with every displayed integer at least two.

\begin{lemma}\label{lem:exceptional-configurations}
The occurrence sets are $\operatorname{Occ}(\mathsf P_1)=\operatorname{Occ}(\mathsf P_4)=\{9,12\}, \operatorname{Occ}(\mathsf P_2)=\operatorname{Occ}(\mathsf P_5)=\varnothing, \operatorname{Occ}(\mathsf P_3)=\{4\}$ and $\operatorname{Occ}(\mathsf P_6)=\{20,25,30,35\}$.
\end{lemma}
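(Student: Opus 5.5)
The plan is to turn each configuration $\mathsf P_i$ into a set of conditions on the phase $\delta_n$, show that these conditions force a specific large value of $\eps$ for one of the three integers, and then use Lemma~\ref{lem:exact-shift} to show that a shifted integer keeps its type. That contradicts the configuration except for small anchors, which are then checked directly.

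\textbf{Step 1: phase constraints.} For a configuration $\mathsf P=(\mathsf X_{j_1},\mathsf X_{j_2},\mathsf X_{j_3})$, write every $\delta_{n+j}$ through $x=\delta_n$, using $\delta_{n+j}=\{x+j\phi^{-1}\}$. Then apply \eqref{eq:phase-bounds} to each of the three integers. A down-integer needs phase in $(0,\beta)$, and an up-integer needs phase in $(\alpha,1)$.

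The relevant shifts $j\phi^{-1}$ modulo $1$ are explicit:
\begin{itemize}
\item $3\phi^{-1}\equiv-\phi^{-4}$;
\item $5\phi^{-1}\equiv\phi^{-5}$;
\item $8\phi^{-1}\equiv-\phi^{-6}$;
\item $13\phi^{-1}\equiv\phi^{-7}$;
\item $2\phi^{-1}\equiv\phi^{-3}$.
\end{itemize}
So each condition becomes membership of $x$ in a short interval near $\alpha$ or $\beta$. For example, in $\mathsf P_1$ we need $x<\beta$ together with $x+\phi^{-5}>\alpha$ and $x-\phi^{-6}>\alpha$ in the appropriate sense. The aim is to show that these intersections force $\eps$ of one member to exceed $\Lambda_r$ for the $r$ with $F_{r-1}$ equal to the gap involved. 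The relevant values are $F_4=3$, $F_5=5$, $F_6=8$ and $F_7=13$, which explains why $r\in\{5,6,7,8\}$. I would verify numerically that $\Lambda_r=\alpha+\phi^{2-r}/\sqrt5$ is exactly the threshold such that, for $\eps$ just beyond it, the phase shifted by $\pm\phi^{1-r}$ lands on the correct side of $\alpha$ or $\beta$.

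\textbf{Step 2: apply Lemma~\ref{lem:exact-shift}.} Take the member $m$ with $\eps_m>\Lambda_r$. Then $m+(-1)^{\tau(m)-r+1}F_{r-1}$ has the same type as $m$. The configuration's sign pattern, such as $\mathsf U_5$ next to $\mathsf D_0$, is designed so that this shifted integer is another member of the configuration, but with the opposite prescribed type. That is a contradiction.

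The parity of $\tau(m)-r$ decides whether the shift is $+F_{r-1}$ or $-F_{r-1}$. So I split according to the parity of $\tau(m)$. In the branch where the shift points the wrong way, I use \eqref{eq:phase-bounds} again: in that branch the phase window forces $\eps$ into a range incompatible with $m$'s type.

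\textbf{Step 3: small anchors.} Lemma~\ref{lem:exact-shift} requires $t\geq r-1$. Canonical integers with $\tau\leq r-2\leq6$ are bounded by $F_6F_7=104$, which is finite. The hypothesis $N\geq2$ also only excludes small $n$. So every anchor where the argument fails is below an explicit bound of a few hundred. Enumerating $D$ and $U$ up to that bound, by listing the sets $\Scal_t$ for $t\leq8$, yields exactly the occurrence sets stated:
\begin{itemize}
\item $\{9,12\}$ for $\mathsf P_1$ and $\mathsf P_4$;
\item $\varnothing$ for $\mathsf P_2$ and $\mathsf P_5$;
\item $\{4\}$ for $\mathsf P_3$;
\item $\{20,25,30,35\}$ for $\mathsf P_6$.
\end{itemize}

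\textbf{Main obstacle.} The hard part is Step 1 for each of the six patterns. I must check that the phase window really forces $\eps>\Lambda_r$, with strict inequality, for the correct member and the correct $r$. This is delicate because the windows have width comparable to $\phi^{-5}$ to $\phi^{-7}$, and the boundary cases interact with the exact bounds $\eps_n\leq\phi^{-t}(\phi F_t-1)$ rather than with the limit $\beta$. My first attempt would use the exact formula $\eps_n=\phi^{-t}(\phi b-a)$ together with the identities \eqref{eq:fib-phi-identities} to handle the boundary terms. If that fails for a pattern, I would fall back on the candidate coefficients of Lemma~\ref{lem:candidate} to pin down $\tau$ of the neighbouring integers directly.
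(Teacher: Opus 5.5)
Your plan is essentially the paper's proof. For each pattern the three phase conditions (wrap-around cases included) confine $x=\delta_n$ so that the anchor itself has $\eps_n>\gamma_r:=\alpha+\phi^{1-r}$, with $r=7,6,5,8$ for $(\mathsf P_1,\mathsf P_4)$, $(\mathsf P_2,\mathsf P_3)$, $\mathsf P_5$, $\mathsf P_6$ respectively. Since $\gamma_r-\Lambda_r=\alpha\phi^{1-r}>0$, Lemma~\ref{lem:exact-shift} then forces the anchor's index below $r-1$, leaving a finite check. In particular $\Lambda_r$ is not the exact threshold, and the boundary issue you worry about does not arise.

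Three simplifications:
\begin{itemize}
\item The anchor's prescribed type already fixes the parity of its index. So the shift is always $+F_{r-1}$, onto the member of opposite type, and your parity split is vacuous.
\item The paper prunes the small cases using the exact value $\eps_n=\phi^{-t}(\phi b-a)>\gamma_r$ rather than enumerating. Your enumeration also works.
\item With the parity restriction, every integer involved is at most $53$, so listing $\Scal_t$ for $t\le 8$ does suffice.
\end{itemize}
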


\begin{proof}
Let $x=\delta_n$ and define $\gamma_r=\alpha+\phi^{1-r}$ for
$5\leq r\leq8$. Since $\delta_{n+j}=\{x+j\phi^{-1}\}$, the necessary
phase conditions are
\[
\begin{aligned}
\mathsf P_1:&\quad x<\beta,
 \ \{x+5\phi^{-1}\}>\alpha,
 \ \{x+8\phi^{-1}\}>\alpha,\\
\mathsf P_2:&\quad \{x-3\phi^{-1}\}>\alpha,
 \ x>\alpha,
 \ \{x+5\phi^{-1}\}<\beta,\\
\mathsf P_3:&\quad x>\alpha,
 \ \{x+3\phi^{-1}\}<\beta,
 \ \{x+5\phi^{-1}\}<\beta,\\
\mathsf P_4:&\quad x<\beta,
 \ \{x+3\phi^{-1}\}<\beta,
 \ \{x+8\phi^{-1}\}>\alpha,\\
\mathsf P_5:&\quad x<\beta,
 \ \{x+3\phi^{-1}\}>\alpha,
 \ \{x+13\phi^{-1}\}>\alpha,\\
\mathsf P_6:&\quad x>\alpha,
 \ \{x+2\phi^{-1}\}>\alpha,
 \ \{x+13\phi^{-1}\}<\beta.
\end{aligned}
\]
Clearly, $2\phi^{-1}=1+\phi^{-3}, 3\phi^{-1}=2-\phi^{-4}, 5\phi^{-1}=3+\phi^{-5}, 8\phi^{-1}=5-\phi^{-6}$ and $13\phi^{-1}=8+\phi^{-7}$. So we can
reduce these intersections to $x\in(\gamma_7,\beta)$ for $(\mathsf P_1,\mathsf P_4)$, $x\in(\alpha,1-\gamma_6)$ for $(\mathsf P_2,\mathsf P_3)$, $x\in(\gamma_5,\beta)$ for $\mathsf P_5$ and $x\in(\alpha,1-\gamma_8)$ for $\mathsf P_6$. Thus $\eps_n>\gamma_7,\gamma_6,\gamma_5,$ and $\gamma_8$ for
$(\mathsf P_1,\mathsf P_4)$, $(\mathsf P_2,\mathsf P_3)$,
$\mathsf P_5$, and $\mathsf P_6$, respectively.
More precisely,
\[
\gamma_5=\frac{20-8\sqrt5}{5},\quad
\gamma_6=\frac{12\sqrt5-25}{5},\quad
\gamma_7=\frac{95-41\sqrt5}{10},\quad
\gamma_8=\frac{32\sqrt5-70}{5}.
\]
Moreover,
\[
\begin{aligned}
\gamma_5-\Lambda_5&=(25-11\sqrt5)/10,&
\gamma_6-\Lambda_6&=(9\sqrt5-20)/5,\\
\gamma_7-\Lambda_7&=(65-29\sqrt5)/10,&
\gamma_8-\Lambda_8&=(47\sqrt5-105)/10,
\end{aligned}
\]
and each difference is positive. For $(\mathsf P_1,\mathsf P_4)$, $(\mathsf P_2,\mathsf P_3)$,
$\mathsf P_5$ and $\mathsf P_6$, the corresponding shifted integers are
$n+F_6=n+8$, $n+F_5=n+5$, $n+F_4=n+3$, and
$n+F_7=n+13$, respectively. Each configuration
requires the shifted integer to have the opposite type. By
Lemma~\ref{lem:exact-shift}, the corresponding anchor indices are less
than $6,5,4,$ and $7$, respectively.

For $(\mathsf P_1,\mathsf P_4)$, the anchor index is even and hence
belongs to $\{2,4\}$. For $t=2$, $\eps_n=\phi^{-3}<\gamma_7$. For
$t=4$, since $2\phi-1<\gamma_7\phi^4<3\phi-2$, the condition $\phi b-a>\gamma_7\phi^4$, where
$1\leq a\leq b\leq3$. It forces $b=3$ and $a\in\{1,2\}$. Hence
$n\in\{9,12\}$. The canonical representations
\[
9=F_4+3F_3,\quad 12=2F_4+3F_3,\quad 15=3F_4+3F_3,
\]
\[
14=F_5+3F_4,\quad17=F_5+4F_4,\quad20=F_5+5F_4
\]
show that both configurations occur at these two anchors.

For $(\mathsf P_2,\mathsf P_3)$, the anchor index is $t=3$. Since $2\phi-2<\gamma_6\phi^3<2\phi-1$, the condition $\phi b-a>\gamma_6\phi^3$, where
$1\leq a\leq b\leq2$, forces $(a,b)=(1,2)$ and $n=4$.
Since $n-3<2$, $\mathsf P_2$ does not occur. Moreover,
$4=F_3+2F_2$ is up, while $7=F_4+2F_3$ and
$9=F_4+3F_3$ are down. Thus $\mathsf P_3$ occurs precisely at $4$.
For $\mathsf P_5$, the only possible index is $t=2$, but
$\eps_n=\phi^{-3}<\gamma_5$. Hence $\mathsf P_5$ does not occur.

For $\mathsf P_6$, the anchor index is $t=3$ or $t=5$. Note that $2\phi-2<\gamma_8\phi^3<2\phi-1$ and $5\phi-5<\gamma_8\phi^5<4\phi-3$. Thus $t=3$ forces $n=4$, while $t=5$ leaves precisely $(a,b)=(1,3)$, $b=4$ with $1\leq a\leq3$ and $b=5$ with $1\leq a\leq4$. The candidate $n=4$ fails because
$n+13=17=F_5+4F_4$ is up. The first two alternatives for $t=5$ give
$n=14,17,22,27$, and
\[
27=3F_5+4F_4,\quad30=3F_5+5F_4,\quad
35=4F_5+5F_4,\quad40=5F_5+5F_4
\]
show that each corresponding integer $n+13$ is up. In the last
alternative, $n=5a+15$ with $1\leq a\leq4$. For $a\leq3$, $n+2=(a+1)F_5+4F_4$,
while $n+2=F_7+3F_6$ for $a=4$. Thus $n+2$ is up. Finally, $n+13=F_6+(a+4)F_5$
has canonical index six. Therefore,
$\operatorname{Occ}(\mathsf P_6)=\{20,25,30,35\}$.
\end{proof}

Since $D_1=U_1=\Gamma_1$ and $D_2=U_2=\Gamma_2$, every pair of
consecutive gaps $g_i,g_{i+1}$ of $D$ or $U$ satisfies
\begin{equation}\label{eq:gap-restrictions}
g_i,g_{i+1}\in\Gamma_1
\qquad\text{and}\qquad
g_i+g_{i+1}\in\Gamma_2.
\end{equation}
For $g_i,g_{i+1}\in\Gamma_1$, the condition
$g_i+g_{i+1}\in\Gamma_2$ fails only when
$(g_i,g_{i+1})=(2,5)$ or $(5,2)$.

For $X\in\{D,U\}$ and $\mathbf g=(g_1,\ldots,g_\ell)$, set
$m=g_1+\cdots+g_\ell$. Let $w_X(\mathbf g)$ be the word indexed by
$\{0,\ldots,m\}$ whose letters at $0,g_1,g_1+g_2,\ldots,g_1+\cdots+g_\ell=m$
are $X$, and whose remaining letters are the other element of
$\{D,U\}$. For $q\in\mathbb Z$, let $\mathsf P_i@q$ denote the
translate of $\mathsf P_i$ by $q$.

\begin{lemma}\label{lem:arithmetic-reduction}
Let $\mathbf g$ satisfy \eqref{eq:gap-restrictions}. If $\mathbf g$ has
three entries, then its weight belongs to
$\Gamma_3\cup\{9,15\}$. If $\mathbf g$ has four entries and the weights
of its two consecutive triples belong to $\Gamma_3$, then its weight
belongs to $\Gamma_4\cup\{4,9,12,14,15\}$.
\end{lemma}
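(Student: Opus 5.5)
The statement is a finite combinatorial check, so the plan is a direct enumeration over the alphabet $\Gamma_1=\{1,2,3,5\}$. The only local constraint used is \eqref{eq:gap-restrictions}: consecutive entries lie in $\Gamma_1$, and the adjacent pairs $(2,5)$ and $(5,2)$ are forbidden. The key simplification is to argue about which integers are \emph{not} attainable as sums, rather than listing all admissible words.

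\textbf{Triples.} A triple from $\Gamma_1$ has weight between $3$ and $15$, so I will show that neither $12$ nor $14$ is attainable.
\begin{itemize}
\item The only multisets of three elements of $\Gamma_1$ with sum $12$ are $\{5,5,2\}$. Indeed, if some entry is at most $3$, the other two must sum to $9$ or $10$ using values at most $5$. The only possibility is $5+5$ with third entry $2$. In any arrangement of $\{5,5,2\}$, the $2$ is adjacent to a $5$, which is forbidden.
\item No three elements of $\Gamma_1$ sum to $14$. Two $5$'s would force the third entry to be $4\notin\Gamma_1$, and at most one $5$ gives a sum of at most $5+3+3=11$.
\end{itemize}
Every other value in $\{3,\dots,15\}$ lies in $\Gamma_3\cup\{9,15\}$, which proves the first claim.

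\textbf{Quadruples.} A quadruple has weight between $4$ and $20$. The target set $\Gamma_4\cup\{4,9,12,14,15\}$ equals $\{4,\dots,16\}\cup\{18\}$, so it suffices to exclude $17$, $19$ and $20$.
\begin{itemize}
\item Weight $20$ forces $(5,5,5,5)$. Its first triple has weight $15\notin\Gamma_3$, which violates the hypothesis.
\item Weight $19$ is impossible in $\Gamma_1$. Four entries summing to $19$ need at least three $5$'s (since $5+5+3+3=16$), and then the fourth entry would be $4$.
\item For weight $17$, the same counting shows the multiset must be $\{5,5,5,2\}$, since $5+5+3+3=16<17$ rules out having only two $5$'s. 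In any arrangement the $2$ is adjacent to a $5$, which is forbidden.
\end{itemize}
Hence the weight lies in $\{4,\dots,16\}\cup\{18\}=\Gamma_4\cup\{4,9,12,14,15\}$.

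The only step needing real care is making sure the multiset enumeration is exhaustive; I would organize it by the number of $5$'s. There is no genuine obstacle, since everything reduces to small case checks under \eqref{eq:gap-restrictions}.
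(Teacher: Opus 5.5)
Your proof is correct and follows essentially the paper's route. Both reduce the claim to excluding the weights $12,14$ for triples and $17,19,20$ for quadruples, and dispose of each by a small multiset count, the forbidden pairs $(2,5),(5,2)$, and (for weight $20$) the triple-weight hypothesis. The paper organizes the count through the deficits $5-g_i\in\{0,2,3,4\}$ rather than by the number of $5$'s. For weight $17$ it invokes the triple hypothesis when the $2$ sits at an end, whereas your uniform adjacency argument handles that case equally well.
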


\begin{proof}
For three gaps, $3\leq m\leq15$, and only $12$ and $14$ lie outside
$\Gamma_3\cup\{9,15\}$. Since
\[
5-g_i\in\{0,2,3,4\},\qquad
\sum_{i=1}^3(5-g_i)=15-m,
\]
the value $m=14$ is impossible, while $m=12$ forces the gaps to be
$2,5,5$ in some order. Every such order contains the adjacent pair
$(2,5)$ or $(5,2)$, contrary to \eqref{eq:gap-restrictions}.

For four gaps, the only values outside
$\Gamma_4\cup\{4,9,12,14,15\}$ are $17,19,$ and $20$. Since
\[
\sum_{i=1}^4(5-g_i)=20-m,
\]
the value $m=19$ is impossible. If $m=20$, all four gaps are $5$, so
each triple has weight $15\notin\Gamma_3$. If $m=17$, exactly one gap
is $2$ and the others are $5$. If $g_2=2$ or $g_3=2$, a forbidden
adjacent pair occurs; if $g_1=2$ or $g_4=2$, one consecutive triple
has weight $15$. Thus all three values are impossible.
\end{proof}

\begin{lemma}\label{lem:word-obstructions}
Let $\mathbf g$ satisfy \eqref{eq:gap-restrictions}.

\noindent
$(i)$ If $\mathbf g$ has three entries and weight $9$, then
$w_D(\mathbf g)$ contains $\mathsf P_1@0$ or $\mathsf P_2@4$, while
$w_U(\mathbf g)$ contains one of
$\mathsf P_2@3,\mathsf P_3@0,\mathsf P_4@1$.
If its weight is $15$, then $w_D(\mathbf g)$ contains
$\mathsf P_5@0$ and $w_U(\mathbf g)$ contains $\mathsf P_5@2$.

\noindent
$(ii)$ Suppose that $\mathbf g$ has four entries and the weights of its
two consecutive triples belong to $\Gamma_3$. If its weight is $9$, the
weight $9$ conclusions in $(i)$ remain valid. If its weight is $12$,
then $w_D(\mathbf g)$ contains one of $\mathsf P_1@0,\mathsf P_2@6,\mathsf P_2@7,\mathsf P_3@2$,
and $w_U(\mathbf g)$ contains one of $\mathsf P_3@0,\mathsf P_4@3,\mathsf P_4@4,\mathsf P_3@5$.
If its weight is $14$, then $w_D(\mathbf g)$ contains
$\mathsf P_1@0$ or $\mathsf P_2@9$, while $w_U(\mathbf g)$ contains
one of $\mathsf P_2@3,\mathsf P_3@0,\mathsf P_4@6$.
If its weight is $15$, then $w_D(\mathbf g)$ contains
$\mathsf P_5@0$ or $\mathsf P_6@2$, while $w_U(\mathbf g)$ contains
$\mathsf P_6@0$ or $\mathsf P_5@2$.
The occurrences of $\mathsf P_6@2$ in $w_D(\mathbf g)$ and
$\mathsf P_6@0$ in $w_U(\mathbf g)$ correspond to
$(5,5,3,2)$ and $(2,3,5,5)$, respectively.
\end{lemma}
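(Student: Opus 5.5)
This is a finite check, so the plan is direct enumeration. First list every admissible gap vector. By \eqref{eq:gap-restrictions}, each entry lies in $\Gamma_1=\{1,2,3,5\}$ and no adjacent pair is $(2,5)$ or $(5,2)$. For three entries of weight $9$, write $9=g_1+g_2+g_3$ with the $g_i\in\{1,2,3,5\}$. The unordered solutions are $\{5,3,1\}$, $\{3,3,3\}$, $\{5,2,2\}$ and $\{1,3,5\}$; the multiset $\{2,2,5\}$ always has a forbidden adjacency and is discarded. This leaves the orderings of $(1,3,5)$ together with $(3,3,3)$. For weight $15$ the only vector is $(5,5,5)$.

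For four entries, I also impose that both consecutive triples have weight in $\Gamma_3$, and enumerate the vectors of weight $9,12,14,15$. For weight $9$, the multisets are $\{1,1,2,5\}$, $\{1,2,3,3\}$, $\{2,2,2,3\}$, $\{1,1,5,2\}$, and so on, subject to the adjacency and triple constraints. For each survivor I write out $w_D(\mathbf g)$ and $w_U(\mathbf g)$ explicitly as words on $\{0,\dots,m\}$. The weight-$9$ four-gap case should need no separate treatment: the statement says the same patterns as in $(i)$ appear, so I would check that the positions used in $(i)$ (namely $0,3,4,5,8,9$) receive the same letters in every four-gap word. Presumably this holds because those positions are determined by the marked/unmarked structure forced by the weight.

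The core verification is to read each listed pattern off each word. Recall that $\mathsf P@q$ in $w_X$ means the letters at $q+j$ match the prescribed types, where $X$ marks the positions at partial sums and the other type fills the gaps. Here is an example. For $\mathbf g=(5,3,1)$ with $X=D$, the positions $0,5,8,9$ are $D$ and all others are $U$. So $w_D$ has $D$ at $0$ and $U$ at $3$ is not needed; $\mathsf P_1@0=(\mathsf D_0,\mathsf U_5,\mathsf U_8)$ fails because $5$ is $D$. I would instead look for $\mathsf P_2@4=(\mathsf U_1,\mathsf U_4,\mathsf D_9)$, which holds. I would tabulate this for every vector, both $X$, and every weight, giving a small finite table that also confirms which pattern appears.

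The last claim concerns weight $15$ with four gaps. I would show that the only admissible vectors are $(5,5,3,2)$ and $(2,3,5,5)$. Triple weights must avoid $15$, so there cannot be three consecutive $5$s. The adjacency constraint forbids $5$ next to $2$, which rules out options such as $(5,3,5,2)$ and $(3,5,5,2)$ and leaves mirrored pairs like $(5,5,3,2)$. Each weight-$15$ vector should then be matched either with $\mathsf P_5@0$/$\mathsf P_5@2$ or with the $\mathsf P_6$ patterns, and I would confirm that $\mathsf P_6$ is needed exactly for the two named vectors.

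The main obstacle is the bookkeeping, namely making sure the enumeration of admissible four-gap vectors of weights $12$ and $14$ is complete under both constraints. I would organize it by the multiset of entries, then filter by adjacency and by triple weights, before checking the patterns.
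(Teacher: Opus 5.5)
Your overall strategy, enumerating the admissible gap vectors and reading the listed patterns off each word, is sound. For weights $12$, $14$, $15$ it is essentially what the paper does. But there is a genuine gap in how you dispose of the four-gap weight-$9$ case.

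\textbf{The shortcut for four gaps of weight $9$ fails.} It is not true that the positions used in $(i)$ receive the same letters in every four-gap word, or even in every three-gap word. The letters are determined by the set $S$ of partial sums, which varies with $\mathbf g$. For example, $(1,2,3,3)$ gives $S=\{0,1,3,6,9\}$, while $(3,2,2,2)$ gives $S=\{0,3,5,7,9\}$; these disagree at $1$ and $5$. Your list of positions also omits $1$, which is used by $\mathsf P_2@4$ and $\mathsf P_4@1$. A four-gap word has three interior marked points rather than two, so nothing checked in $(i)$ transfers. The assertion that the weight-$9$ conclusions ``remain valid'' is a separate claim that needs its own verification.

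\textbf{How to repair it.} Within your method, actually carry out the enumeration.
\begin{itemize}
\item The multisets are exactly $\{1,1,2,5\}$, $\{1,2,3,3\}$ and $\{2,2,2,3\}$; your $\{1,1,5,2\}$ is a duplicate, and ``and so on'' adds nothing.
\item The triple condition is automatic here, since $9-g\in\{4,6,7,8\}\subseteq\Gamma_3$ for every $g\in\Gamma_1$.
\item The adjacency rule leaves $6+12+4=22$ vectors. Each of them does contain one of $\mathsf P_1@0,\mathsf P_2@4$ in $w_D$ and one of $\mathsf P_2@3,\mathsf P_3@0,\mathsf P_4@1$ in $w_U$.
\end{itemize}
Alternatively, use the paper's argument, which treats two and three interior points together. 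If $w_D$ avoids both $\mathsf P_1@0$ and $\mathsf P_2@4$, then $S$ meets both $\{5,8\}$ and $\{1,4\}$. Every such choice forces a gap of $4$, or a gap of $7$, or (with a third interior point) a split of $7$ into the forbidden pair $2,5$ or $5,2$. Similarly, if $w_U$ avoids $\mathsf P_3@0$ and $\mathsf P_4@1$, then $S$ meets $\{3,5\}$ and $\{1,4\}$. A short case split on whether $3,8\in S$ then yields $\mathsf P_2@3$ or a forbidden gap.

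\textbf{On your stated main obstacle.} Completeness for weights $12$, $14$, $15$ is immediate if you apply the triple conditions first, in the form $m-g_1,\,m-g_4\in\Gamma_3$. This forces $g_1,g_4$ into $\{1,2,5\}$, $\{1,3\}$ and $\{2,5\}$ respectively. Then $g_2+g_3$ is fixed, and the adjacency rule leaves the short lists (up to reversal) that the paper checks.

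For weight $15$, your remark about avoiding three consecutive $5$s is not what does the work. The real reason only $(2,3,5,5)$ and $(5,5,3,2)$ survive is that $\{2,3,5,5\}$ is the only multiset of four entries with that sum. The $2$ must then sit at an end beside the $3$.
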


\begin{proof}
Let $S=\{0,g_1,g_1+g_2,\ldots,m\}$. Suppose first that $m=9$. If $w_D(\mathbf g)$ contains neither
$\mathsf P_1@0$ nor $\mathsf P_2@4$, then $S\cap\{5,8\}\ne\varnothing$ and $S\cap\{1,4\}\ne\varnothing$.
With two internal partial sums, the pairs
$(1,5),(4,5),(4,8)$ force a gap of length $4$, while $(1,8)$ forces
a gap of length $7$. With three internal partial sums, each of the
first three pairs leaves two intervals of length $4$ and only one
unused partial sum. For $(1,8)$, the unused point must split the
interval of length $7$ into gaps $2,5$ or $5,2$, contrary to
\eqref{eq:gap-restrictions}. This proves the assertion for $w_D$.

If $w_U(\mathbf g)$ contains neither $\mathsf P_3@0$ nor
$\mathsf P_4@1$, then $S$ meets both $\{3,5\}$ and $\{1,4\}$.
With two internal partial sums, $3\in S$ forces $4\in S$ and hence
$\mathsf P_2@3$; if $3\notin S$, then $5\in S$, and the other point
forces a gap of length $4$. With three internal partial sums,
$3\in S$ and $8\notin S$ yield $\mathsf P_2@3$. If $3,8\in S$, the
third point is $1$ or $4$, which forces a forbidden pair or a gap of
length $4$. If $3\notin S$, the points $5$ and $1$ or $4$ leave two
intervals of length $4$ and only one unused partial sum. This proves
the assertion for $w_U$.

If $\mathbf g$ has three entries and $m=15$, then
$\mathbf g=(5,5,5)$ and $S=\{0,5,10,15\}$. Hence $w_D(\mathbf g)$
contains $\mathsf P_5@0$ and $w_U(\mathbf g)$ contains
$\mathsf P_5@2$.

For four entries, the two triple conditions require $m-g_1,m-g_4\in\Gamma_3$.
If $m=12$, then $g_1,g_4\in\{1,2,5\}$. The equation
$g_2+g_3=12-g_1-g_4$ and \eqref{eq:gap-restrictions} leave, up to
reversal,
\[
(1,x,6-x,5)\quad(x\in\{1,3,5\}),\qquad
(1,5,5,1),\qquad(2,2,3,5),\qquad(5,1,1,5).
\]
Every displayed vector except the last contains $\mathsf P_1@0$ in
$w_D$ and $\mathsf P_3@0$ in $w_U$. Their reversals contain one of
$\mathsf P_2@6,\mathsf P_2@7$ in $w_D$ and one of
$\mathsf P_4@3,\mathsf P_4@4$ in $w_U$. The last vector contains
$\mathsf P_3@2$ in $w_D$ and $\mathsf P_3@5$ in $w_U$.

If $m=14$, then $g_1,g_4\in\{1,3\}$, and the same conditions leave,
up to reversal,
\[
(1,5,5,3),\qquad(3,3,5,3).
\]
Both vectors contain $\mathsf P_1@0$ in $w_D$; in $w_U$ they contain
$\mathsf P_3@0$ and $\mathsf P_2@3$, respectively. Their reversals
contain $\mathsf P_2@9$ in $w_D$ and $\mathsf P_4@6$ in $w_U$.

If $m=15$, then $g_1,g_4\in\{2,5\}$, and
\eqref{eq:gap-restrictions} leaves only $(2,3,5,5)$ and
$(5,5,3,2)$. The former contains $\mathsf P_5@0$ in $w_D$ and
$\mathsf P_6@0$ in $w_U$; the latter contains $\mathsf P_6@2$ in
$w_D$ and $\mathsf P_5@2$ in $w_U$. Thus the result  follows. 
\end{proof}

We shall use the following type assignments:
\begin{equation}\label{eq:small-types}
\begin{aligned}
&\{3,4,6\}\subset U,\qquad
\{5,7,9,10,12,15\}\subset D,\\
&\{13,18,23,26,31,36,41\}\subset D,\qquad
\{5a+3b:1\leq a\leq b\leq5\}\subset U,\\
&\{21,37,42\}\subset U.
\end{aligned}
\end{equation}
These inclusions follow from
\[
\{3,4,6\}
=\{aF_3+bF_2:1\leq a\leq b\leq2\},
\]
\[
\{5,7,9,10,12,15\}
=\{aF_4+bF_3:1\leq a\leq b\leq3\},
\]
\[
\{13,18,23,26,31,36,41\}
=\{F_6+bF_5:1\leq b\leq3\}
 \cup\{2F_6+bF_5:2\leq b\leq5\},
\]
\[
\{5a+3b:1\leq a\leq b\leq5\}
=\{aF_5+bF_4:1\leq a\leq b\leq5\},
\]
and
\[
\{21,37,42\}
=\{F_7+F_6,F_7+3F_6,2F_7+2F_6\}.
\]
The corresponding canonical indices are $3,4,6,5,$ and $7$,
respectively.

\begin{proof}[\bf Proof of Theorem~\ref{thm:small-spectra}]
Let $n$ be the initial point of three consecutive gaps, let $\mathbf g$
be their gap vector, and let $m$ be its weight. By Lemma~\ref{lem:arithmetic-reduction}, $m\in\Gamma_3\cup\{9,15\}$.
A three gap word contains exactly four integers of its type.

Suppose that $m=9$ and $n\in D$. By
Lemma~\ref{lem:word-obstructions} and
Lemma~\ref{lem:exceptional-configurations}, $w_D(\mathbf g)$ contains
$\mathsf P_1@0$ and $n\in\{9,12\}$. If $n=12$, then the endpoint
$21$ belongs to $U$. If $n=9$, then $\{9,10,12,13,15\}\subseteq D\cap[9,18]$,
which contains more than four down integers.

Suppose that $m=9$ and $n\in U$. By
Lemma~\ref{lem:word-obstructions} and
Lemma~\ref{lem:exceptional-configurations}, $w_U(\mathbf g)$ contains
$\mathsf P_3@0$ or $\mathsf P_4@1$. The first possibility requires
$n=4$, but its endpoint $13$ belongs to $D$. The second requires
$n\in\{8,11\}$, and $\{8,11,14,16,17\}\subseteq U\cap[8,17], \{11,14,16,17,19\}\subseteq U\cap[11,20]$. Thus $9\notin D_3\cup U_3$.

If $m=15$, then, by Lemma~\ref{lem:word-obstructions}, the corresponding
word contains a translate of $\mathsf P_5$. By
Lemma~\ref{lem:exceptional-configurations},
$\operatorname{Occ}(\mathsf P_5)=\varnothing$. Hence $D_3\cup U_3\subseteq\Gamma_3$.
By Lemma~\ref{lem:gap-blocks},
$\Gamma_3\subseteq D_3\cap U_3$. Therefore, $D_3=U_3=\Gamma_3$.

Now let $n$ be the initial point of four consecutive gaps, with gap
vector $\mathbf g$ and weight $m$. The weights of both consecutive
triples belong to $\Gamma_3$. By Lemma~\ref{lem:arithmetic-reduction}, $m\in\Gamma_4\cup\{4,9,12,14,15\}$.
A four gap word contains exactly five integers of its type.

If $m=4$, all four gaps equal $1$, so five consecutive integers have
the same type. Since $2\in D$, $3\in U$, and both sets are infinite,
this block lies between two consecutive integers of the other type
whose difference is at least $6$, contrary to
$D_1=U_1=\Gamma_1$.

Suppose that $m=9$ and $n\in D$. By
Lemma~\ref{lem:word-obstructions} and
Lemma~\ref{lem:exceptional-configurations}, $w_D(\mathbf g)$ contains
$\mathsf P_1@0$ and $n\in\{9,12\}$. For $n=12$, the endpoint $21$
belongs to $U$; for $n=9$, $\{9,10,12,13,15,18\}\subseteq D\cap[9,18]$.
Thus $D_4(9)=\varnothing$. If $n\in U$, then, by
Lemma~\ref{lem:word-obstructions} and
Lemma~\ref{lem:exceptional-configurations}, $w_U(\mathbf g)$ contains
$\mathsf P_3@0$ or $\mathsf P_4@1$. The first possibility requires
$n=4$ and has the down endpoint $13$. The second requires
$n\in\{8,11\}$. Now $\{11,14,16,17,19,20\}\subseteq U\cap[11,20]$,
whereas $U\cap[8,17]=\{8,11,14,16,17\}$.
Hence $U_4(9)=\{8\}$.

Suppose that $m=12$. By Lemma~\ref{lem:word-obstructions} and
Lemma~\ref{lem:exceptional-configurations}, the only possibilities for
$w_D(\mathbf g)$ are $\mathsf P_1@0$ and $\mathsf P_3@2$. In the
first case, $n\in\{9,12\}$ and the endpoints $21,24$ belong to $U$.
In the second case, $n=2$ and the endpoint $14$ belongs to $U$.
For $w_U(\mathbf g)$, the same two lemmas leave $\mathsf P_3@0,\mathsf P_3@5,\mathsf P_4@3$ and $\mathsf P_4@4$.
The first requires $n=4$, but $4,6,8,11,14,16$ are all up. The second
requires $n=-1$. The third requires $n=6$ or $9$; the former has the
down endpoint $18$, and the latter belongs to $D$. The fourth requires
$n=5$ or $8$; the former belongs to $D$, while $\{8,11,14,16,17,19\}\subseteq U\cap[8,20]$.
Thus $12\notin D_4\cup U_4$.

Suppose that $m=14$. By Lemma~\ref{lem:word-obstructions} and
Lemma~\ref{lem:exceptional-configurations}, a down word contains
$\mathsf P_1@0$ and therefore starts at $9$ or $12$. The corresponding
intervals contain $\{9,10,12,13,15,18\}\subseteq D$ and $\{12,13,15,18,23,26\}\subseteq D$, respectively.
For an up word, the same two lemmas leave $\mathsf P_3@0$ or
$\mathsf P_4@6$. The first starts at $4$ and has the down endpoint
$18$. The second starts at $3$ or $6$, and the corresponding intervals
contain $\{3,4,6,8,11,14\}\subseteq U$ and $\{6,8,11,14,16,17\}\subseteq U$, respectively.
Thus $14\notin D_4\cup U_4$.

Suppose that $m=15$ and $n\in U$. By
Lemma~\ref{lem:word-obstructions} and
Lemma~\ref{lem:exceptional-configurations}, the gap vector is
$(2,3,5,5)$ and $n\in\{20,25,30,35\}$.
By \eqref{eq:small-types}, $\{n+7:n\in\{20,25,30,35\}\}=\{27,32,37,42\}\subseteq U$.
However, $7$ is not a partial sum of $(2,3,5,5)$. If $n\in D$, the
same two lemmas require the gap vector $(5,5,3,2)$ and $n+2\in\{20,25,30,35\}$.
By \eqref{eq:small-types}, $\{n+8:n+2\in\{20,25,30,35\}\}=\{26,31,36,41\}\subseteq D$,
although $8$ is not a partial sum of $(5,5,3,2)$. Hence
$15\notin D_4\cup U_4$.

By Lemma~\ref{lem:gap-blocks},
$\Gamma_4\subseteq D_4\cap U_4$. Therefore, $D_4=\Gamma_4, U_4=\Gamma_4\cup\{9\}, D_4(9)=\varnothing$ and $U_4(9)=\{8\}$.
\end{proof}

\section{Proofs of Theorem \ref{thm:phase-density} and Corollary \ref{cor:first-density}}\label{sec:density}

For a Lebesgue measurable set $A\subseteq[0,1)$, let
$\operatorname{Leb}(A)$ denote its Lebesgue measure and let
$\boldsymbol1_A$ denote its indicator function. For integers $\ell,m\geq1$, $0\leq\theta\leq1$, and
$0\leq h\leq m$, define $I_h(\theta)=\{z\in[0,1):\{z+h\phi^{-1}\}<\theta\}$. Let
\[
 g_{\ell,m}(\theta)=\operatorname{Leb}\left\{z:
 z\in I_0(\theta)\cap I_m(\theta),\quad
 \sum_{h=0}^m\boldsymbol1_{I_h(\theta)}(z)=\ell+1\right\}.
\]
The phase profile is
\begin{equation}\label{eq:phase-profile}
 \mathcal P_{\ell,m}(\lambda)
 =\frac1\lambda\int_0^\lambda g_{\ell,m}(\Theta(v))\,dv
 \qquad(\lambda>0),
\end{equation}
where the value of the integrand at $v=0$ is immaterial.  Since the event
can change only on
$\bigcup_{h=0}^m(I_h(\theta)\mathbin\triangle I_h(\theta'))$, $|g_{\ell,m}(\theta)-g_{\ell,m}(\theta')|\le(m+1)|\theta-\theta'|$.
For $(x,y)\in\Tcal$, define $u=x+\phi^{-1} y$ and $z=(\phi y-x)/\sqrt5$.

\begin{proof}[\bf Proof of Theorem~\ref{thm:phase-density}]
Let $t$ tend to infinity through even integers, let
$n_t=a_tF_t+b_tF_{t-1}\in\Scal_t$, and suppose that
$(a_t/F_t,b_t/F_t)\to(x,y)\in\Tcal^\circ$. By
Lemma~\ref{lem:candidate},
$(n_t/F_t^2,\delta_{n_t})=L_t(a_t/F_t,b_t/F_t)$ tends to
$L(x,y)=(u,z)$.
Assume that $\{z+h\phi^{-1}\}\notin\{0,\Theta(u)\}$ for
$0\leq h\leq m$. By Lemma~\ref{lem:layer-coding}, for all sufficiently
large $t$, $n_t+h\in D$ if and only if $\{z+h\phi^{-1}\}<\Theta(u)$ with $0\leq h\leq m$.

Let $\Omega_{\ell,m}\subset\Tcal^\circ$ consist of the points $(x,y)$
for which $c_h(x,y)=\boldsymbol1_{[0,\Theta(u))}\bigl(\{z+h\phi^{-1}\}\bigr)$ with $0\leq h\leq m$, satisfy $c_0=c_m=1$ and $\sum_{h=0}^m c_h=\ell+1$. Define $V_{\ell,m}(s)=\operatorname{area}\{(x,y)\in\Omega_{\ell,m}:u\leq s\}$.
Fix $\eta>0$. On $\Tcal\cap\{u\geq\eta\}$, the branch boundaries of
$\Theta$ and the exceptional sets $\{(x,y):\{z+h\phi^{-1}\}\in\{0,\Theta(u)\}\}$ with $0\leq h\leq m$, lie in a finite union $\mathcal L_\eta$ of line segments. Hence $\partial\Omega_{\ell,m}\cap\{u\geq\eta\}\subseteq\mathcal L_\eta\cup\partial\Tcal$.
Since the region $u<\eta$ has area $O(\eta^2)$,
$\Omega_{\ell,m}$ is Jordan measurable.

For fixed $s$, remove $\{u<\eta\}$ and the $\delta$-neighborhoods of
$\mathcal L_\eta\cup\partial\Tcal\cup\{u=s\}$. If the classification are not uniform on the remaining compact set, there would be even
integers $t_i\to\infty$, convergent coefficient grid points, and a fixed
$h$ for which it fails. By Lemma~\ref{lem:candidate}, the corresponding
scale and phase have the required limits. By
Lemma~\ref{lem:layer-coding}, this is impossible. The removed grid
proportion has limsup $O_\eta(\delta)+O(\eta^2)$. Since
$n/F_t^2=A+q_tB$ and $q_t\to\phi^{-1}$, the removal of $u=s$ also
accounts for the cutoff $n\leq sF_t^2$. Letting first $t\to\infty$ through even integers, then
$\delta\to0$, and finally $\eta\to0$, we obtain, for every fixed
$s\geq0$,
\[
\frac1{F_t^2}\#\{n\in\Scal_t:n\leq sF_t^2,\ 
n\in D_\ell(m)\}\longrightarrow V_{\ell,m}(s).
\]
Moreover, $0\leq V_{\ell,m}(s)\leq\phi^2s^2/2$, and $V_{\ell,m}$ is
continuous because every line $u=s$ has area zero. The normalized
counts and $V_{\ell,m}$ are nondecreasing in $s$ and constant for
$s\geq2$. A finite partition of $[0,2]$ on which every increment of
$V_{\ell,m}$ is less than $\varepsilon$ shows that the convergence is
uniform in $s$. For odd $t$, one has $e_t(n)=1-\delta_n$. By
Lemma~\ref{lem:candidate} and Lemma~\ref{lem:layer-coding}, $c_h$
records membership of $n-h$ in $U$. The same argument therefore gives
uniform convergence for counts of the right endpoints of occurrences
in $U_\ell(m)$.

Let $e=t_*(X)$ and write $t=e+2j$. For fixed $H$, uniformity in $s$ and
$F_{e+2j}^2/F_e^2=\phi^{4j}+o(1)$ permit termwise passage to the limit
for $|j|\leq H$. Since $1\leq\lambda_X<\phi^4+o(1)$ and
$|\Scal_t|=F_t(F_t+1)/2$, the negative tail is
$O(\sum_{j<-H}\phi^{4j})$. For $j>H$, put
$M_t=\lfloor X/F_{t-1}\rfloor$. Then
\[
 \#(\Scal_t\cap[1,X])\leq\tfrac12M_t(M_t+1),
\]
so the quadratic terms contribute $O(\sum_{j>H}\phi^{-4j})$, while
$\sum_{j>H}M_t/X\leq\sum_{j>H}F_{t-1}^{-1}=o(1)$. Both estimates are
uniform in $\lambda_X$. Letting first $X\to\infty$ and then
$H\to\infty$, we obtain
\[
 \frac{|D_\ell(m)\cap[1,X]|}{X}
 =\frac1{\lambda_X}\sum_{j\in\mathbb Z}
 \phi^{4j}V_{\ell,m}(\lambda_X\phi^{-4j})+o(1).
\]

The determinant one change of variables above has vertical section
\[
 \mathcal J(u)=\left(\frac{\phi^{-2}u}{\sqrt5},
 \min\left\{\frac{\phi^2u}{\sqrt5},1-\frac u{\sqrt5}\right\}\right)
 \quad(0<u<\phi),
\]
and $\mathcal J(u)=\varnothing$ otherwise. After setting
$v=\phi^{4j}u$, the $j$th term integrates over
$\mathcal J(v\phi^{-4j})$. For fixed $v$, these intervals are pairwise disjoint, and their union
agrees with $(0,\Theta(v))$ up to their endpoints. Indeed, if
$\xi=\phi^{4r}v\in[\phi^{-3},\phi)$, the interval for $j=-r$ has upper
endpoint $\Theta(v)$, those for $j<-r$ are empty, the remaining
intervals meet successively at their endpoints, and their lower
endpoints tend to zero. Since $\Theta(v\phi^{-4j})=\Theta(v)$ and the
event contains $z\in I_0(\Theta(v))$,
\[
 \sum_{j\in\mathbb Z}\phi^{4j}V_{\ell,m}(\lambda\phi^{-4j})
 =\int_0^\lambda g_{\ell,m}(\Theta(v))\,dv.
\]
By \eqref{eq:phase-profile}, this proves the required asymptotic formula
for $D_\ell(m)$.

The Lipschitz bound for $g_{\ell,m}$ and continuity of $\Theta$ show
that $\mathcal P_{\ell,m}$ is continuous. Substitution $v=\phi^4w$ in
\eqref{eq:phase-profile} proves its multiplicative $\phi^4$ periodicity.
For the odd layers, use $e-1$ as the base index. The same sum and tail
estimates give $\mathcal P_{\ell,m}(X/F_{e-1}^2)+o(1)$ for the
normalized right-endpoint count. After summation, replacing right
endpoints by left endpoints changes the count by at most $m$. Since
$X/F_{e-1}^2=\phi^2\lambda_X+o(1)$,
\[
 \frac{|U_\ell(m)\cap[1,X]|}{X}
 =\mathcal P_{\ell,m}(\phi^2\lambda_X)+o(1).
\]

Finally,
\[
 1\leq\lambda_X<
 \frac{F_{t_*(X)+2}^2}{F_{t_*(X)}^2}=\phi^4+o(1).
\]
Every sequence $X_k\to\infty$ therefore has a subsequence on which
$\lambda_{X_k}$ converges in $[1,\phi^4]$. Conversely, for every
$\lambda\in[1,\phi^4)$, the sequence
$X_r=\lfloor\lambda F_{2r}^2\rfloor$ satisfies
$\lambda_{X_r}\to\lambda$. Thus the down limit set is
$\mathcal P_{\ell,m}([1,\phi^4])$. The up limit set is the same as
$\mathcal P_{\ell,m}([\phi^2,\phi^6])
=\mathcal P_{\ell,m}([1,\phi^4])$.
\end{proof}

\begin{proposition}\label{prop:finite-profile}
The restriction of $g_{\ell,m}$ to $J$ is piecewise affine, with
breakpoints in $\{\alpha,\beta\}\cup\bigl(J\cap\{\{r\phi^{-1}\}:|r|\leq m\}\bigr)$.
On a finite subdivision of one multiplicative period,
$\mathcal P_{\ell,m}$ has the form
$A\lambda+B+C/\lambda$ with algebraic coefficients. Its extrema on
that period are determined by the subdivision endpoints and at most
one interior critical point from each nonconstant piece.
\end{proposition}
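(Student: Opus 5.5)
The plan has three parts: show that $g_{\ell,m}$ is piecewise affine on $J$, pull that structure back through $\Theta$ on each affine branch, and then do one-variable calculus on each resulting piece.

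\textbf{Structure of $g_{\ell,m}$.} Write $\rho_h=\{-h\phi^{-1}\}$, so that $I_h(\theta)$ is the arc of the circle $[0,1)$ of length $\theta$ starting at $\rho_h$ (translation by $-h\phi^{-1}$ of $[0,\theta)$). The event defining $g_{\ell,m}(\theta)$ is a Boolean combination of the arcs $I_0(\theta),\dots,I_m(\theta)$. It is therefore a finite union of cells of the partition of the circle generated by the $2(m+1)$ endpoints $\rho_h$ and $\{\rho_h+\theta\}$.

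Consider an interval of $\theta$ on which the cyclic order of these $2(m+1)$ points does not change. There the combinatorial type of every cell, meaning which arcs cover it, is fixed. Each cell length has the form $\pm(\rho_h-\rho_{h'})$, $\pm(\rho_h+\theta-\rho_{h'})$ or $\pm\theta$ plus an integer. So $g_{\ell,m}$ is affine in $\theta$ with slope in $\mathbb Z$.

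The order can change only when $\rho_{h}+\theta\equiv\rho_{h'}$ modulo $1$, that is, when $\theta=\{(h-h')\phi^{-1}\}$ with $|h-h'|\leq m$. Restricting to $J$ and adding its endpoints $\alpha,\beta$ gives exactly the stated breakpoint set. Ties among the $\rho_h$ themselves never occur, since $\phi^{-1}$ is irrational. Continuity across breakpoints follows from the Lipschitz bound already noted, so $g$ is a continuous piecewise affine function.

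\textbf{Passing to $\mathcal P_{\ell,m}$.} Take the period $[\phi^{-3},\phi)$ in the variable $\xi$. On $[\phi^{-3},\phi^{-1}]$ we have $\Theta=\phi^2\xi/\sqrt5$, and on $[\phi^{-1},\phi)$ we have $\Theta=1-\xi/\sqrt5$; both are affine bijections onto $J$. Pull back the breakpoints of $g$ through each branch to obtain finitely many points of the period. These points are algebraic, because every breakpoint lies in $\mathbb Q(\sqrt5)$.

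On each resulting subinterval, $g_{\ell,m}(\Theta(v))=c_1v+c_0$ with $c_i\in\mathbb Q(\sqrt5)$. Fix a subinterval endpoint $\lambda_0$. By the multiplicative periodicity of $\Theta$, the integral $\int_0^{\lambda_0}$ is a constant $K$; it is a finite sum over earlier pieces plus a geometric series over the earlier periods. For $\lambda$ in the subinterval this gives
\[
\lambda\mathcal P_{\ell,m}(\lambda)=K+\tfrac{c_1}{2}(\lambda^2-\lambda_0^2)+c_0(\lambda-\lambda_0).
\]
Dividing by $\lambda$ yields $A\lambda+B+C/\lambda$ with $A=c_1/2$, $B=c_0$ and $C=K-c_1\lambda_0^2/2-c_0\lambda_0$.

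The main obstacle is the algebraicity of $K$. Periodicity gives $\int_0^{\phi^{4r}\cdot x}=\phi^{4r}\cdot\int_0^{x}$ after rescaling, so the series over earlier periods sums to $(1-\phi^{-4})^{-1}\phi^{-4}\int_{\phi^{-3}}^{\phi}g(\Theta(v))\,dv$ (suitably placed). This is a finite rational combination of algebraic numbers, so $K$ is algebraic. One must also handle a subinterval containing the period boundary; I would choose the period start as a subdivision point so this does not arise.

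\textbf{Extrema.} On a piece, the derivative of $A\lambda+B+C/\lambda$ is $A-C/\lambda^2$. It has at most one positive zero, $\lambda=\sqrt{C/A}$, when $AC>0$. If $A=C=0$ the piece is constant. By continuity of $\mathcal P_{\ell,m}$ (established in the theorem), the extrema over the compact period are attained either at subdivision endpoints or at these interior critical points, at most one per nonconstant piece.
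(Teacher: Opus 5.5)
Your proposal is correct and follows essentially the same route as the paper. The three steps match: the cyclic order of the arc endpoints $\{-h\phi^{-1}\}$ and $\{\theta-h\phi^{-1}\}$ changes only at $\theta=\{(h-h')\phi^{-1}\}$; the breakpoints are pulled back through the two affine branches of $\Theta$; and $(\lambda\mathcal P_{\ell,m})'=g_{\ell,m}\circ\Theta$ is integrated on each piece. Your geometric-series evaluation of $K=\int_0^{\lambda_0}g_{\ell,m}(\Theta(v))\,dv$ is the same computation the paper packages as its cycle-average identity, which it uses to conclude that $C\in\mathbb Q(\sqrt5)$.
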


\begin{proof}
Set
\[
\mathcal B_m=\{\alpha,\beta\}\cup
\bigl(J\cap\{\{r\phi^{-1}\}:-m\leq r\leq m\}\bigr)
=\{\alpha=b_0<b_1<\cdots<b_s=\beta\}.
\]
By the definition of $I_h(\theta)$, its boundary points on the circle
are $\{-h\phi^{-1}\}$ and $\{\theta-h\phi^{-1}\}$. By comparing these
points, their circular order can change only when $\theta=\{(h-k)\phi^{-1}\}$ with $0\leq h,k\leq m$;
see also \cite[\S2.1]{AlessandriBerthe}. Between consecutive points of
$\mathcal B_m$, the arcs determined by these endpoints have fixed
membership patterns and lengths affine in $\theta$. Therefore, $g_{\ell,m}(\theta)=A_i\theta+B_i$ with $A_i\in\mathbb Z, B_i\in\mathbb Q(\sqrt5)$,
on $(b_i,b_{i+1})$. By the preceding Lipschitz bound, these formulas
extend continuously to the endpoints.

On $[\phi^{-3},\phi^{-1}]$ and $[\phi^{-1},\phi]$, we have $\Theta(v)=\phi^2v/\sqrt5$ and $\Theta(v)=1-v/\sqrt5$,
and both branches map onto $J$. Hence the preimages of the $b_i$
determine a finite subdivision of $[\phi^{-3},\phi]$ on which
$f_{\ell,m}=g_{\ell,m}\circ\Theta$ is affine with coefficients in
$\mathbb Q(\sqrt5)$. By the periodicity
$f_{\ell,m}(\phi^4v)=f_{\ell,m}(v)$,
\[
\begin{aligned}
\int_0^\lambda f_{\ell,m}(v)\,dv
&=\sum_{r=0}^{\infty}
  \int_{\phi^{-4(r+1)}\lambda}^{\phi^{-4r}\lambda}
  f_{\ell,m}(v)\,dv\\
&=\sum_{r=0}^{\infty}\phi^{-4r}
  \int_{\phi^{-4}\lambda}^{\lambda}f_{\ell,m}(v)\,dv\\
&=\frac1{1-\phi^{-4}}
  \int_{\phi^{-4}\lambda}^{\lambda}f_{\ell,m}(v)\,dv.
\end{aligned}
\]
By \eqref{eq:phase-profile}, we have
\begin{equation}\label{eq:cycle-average}
\mathcal P_{\ell,m}(\lambda)
=\frac1{\lambda(1-\phi^{-4})}
\int_{\phi^{-4}\lambda}^{\lambda}f_{\ell,m}(v)\,dv.
\end{equation}

On a piece where $f_{\ell,m}(\lambda)=p\lambda+q$, differentiation of
\eqref{eq:phase-profile} yields $\bigl(\lambda\mathcal P_{\ell,m}(\lambda)\bigr)'=f_{\ell,m}(\lambda)$,
and hence $\mathcal P_{\ell,m}(\lambda)=p\lambda/2+q+C/\lambda$.
All subdivision endpoints and the coefficients $p,q$ belong to
$\mathbb Q(\sqrt5)$. By \eqref{eq:cycle-average}, evaluation at an
endpoint shows that $C\in\mathbb Q(\sqrt5)$. Finally, $\mathcal P_{\ell,m}'(\lambda)=p/2-C/\lambda^2$, so each nonconstant piece has at most one interior critical point, while the endpoints suffice for a constant piece.
\end{proof}

\begin{corollary}\label{cor:density-consequences}
The lower densities of $D_\ell(m)$ and $U_\ell(m)$ are both the minimum
of $\mathcal P_{\ell,m}$ on $[1,\phi^4]$, and their upper densities are
both its maximum. They have natural densities if and only if
$g_{\ell,m}$ is constant on $J$, in which case the densities are equal.
Their common lower density is positive if and only if
$g_{\ell,m}\not\equiv0$ on $J$. Their logarithmic densities always
exist and are both equal to
\begin{equation}\label{eq:log-density}
 \frac1{4\log\phi}\int_\alpha^\beta
 \frac{g_{\ell,m}(\theta)}{\theta(1-\theta)}\,d\theta.
\end{equation}
\end{corollary}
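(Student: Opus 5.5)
Everything will be read off from Theorem~\ref{thm:phase-density} together with the cycle-average formula \eqref{eq:cycle-average}.

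\emph{Lower and upper densities.} By Theorem~\ref{thm:phase-density}, the normalized counts of $D_\ell(m)$ and of $U_\ell(m)$ have the same set of subsequential limits, namely $\mathcal P_{\ell,m}([1,\phi^4])$. This set is compact because $\mathcal P_{\ell,m}$ is continuous. So the liminf and the limsup of each normalized count equal $\min_{[1,\phi^4]}\mathcal P_{\ell,m}$ and $\max_{[1,\phi^4]}\mathcal P_{\ell,m}$, and a natural density exists precisely when $\mathcal P_{\ell,m}$ is constant. When it does, the two densities coincide.

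\emph{Natural density.} Write $f=g_{\ell,m}\circ\Theta$. From $(\lambda\mathcal P_{\ell,m})'=f$ almost everywhere:
\begin{itemize}
\item if $\mathcal P_{\ell,m}\equiv c$, then $f\equiv c$;
\item conversely, if $f\equiv c$, then \eqref{eq:cycle-average} gives $\mathcal P_{\ell,m}\equiv c$.
\end{itemize}
Since $\Theta$ maps $[\phi^{-3},\phi^{-1}]$ onto $J$, the function $f$ is constant if and only if $g_{\ell,m}$ is constant on $J$. Continuity of $g_{\ell,m}$ (it is Lipschitz) handles the almost-everywhere issue.

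\emph{Positivity of the lower density.} By \eqref{eq:cycle-average}, $\mathcal P_{\ell,m}(\lambda)$ is a positive multiple of $\int_{\phi^{-4}\lambda}^{\lambda}f$. This integral runs over a full multiplicative period, because $f$ is $\phi^4$-periodic. Since $f\ge0$ is continuous, the integral is positive for every $\lambda$ exactly when $f\not\equiv0$, i.e.\ when $g_{\ell,m}\not\equiv0$ on $J$. Then the minimum over the compact period is positive.

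\emph{Logarithmic density.} This is the main computation. Put $N(X)=|E\cap[1,X]|$ and use partial summation:
\[
\sum_{n\le X,\,n\in E}\frac1n=\frac{N(X)}{X}+\int_1^X\frac{N(x)}{x^2}\,dx .
\]
Substituting $N(x)/x=\mathcal P_{\ell,m}(\lambda_x)+o(1)$ (or $\mathcal P_{\ell,m}(\phi^2\lambda_x)+o(1)$ in the $U$ case), and using $\lambda_x=x/F_{t_*}^2$ with $F_{t}^2=\phi^{2t}/5+O(1)$, the integral becomes
\[
\int_1^X\frac{\mathcal P_{\ell,m}(\lambda_x)}{x}\,dx+o(\log X).
\]
On each block $x\in[F_t^2,F_{t+2}^2)$ with $t$ even, substitute $x=F_t^2\lambda$. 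The block then contributes $\int_1^{\phi^4}\mathcal P_{\ell,m}(\lambda)\,d\lambda/\lambda+o(1)$. There are $\log X/(4\log\phi)+O(1)$ blocks, so
\[
\delta_{\log}=\frac1{4\log\phi}\int_1^{\phi^4}\mathcal P_{\ell,m}(\lambda)\frac{d\lambda}{\lambda}.
\]
For $U$, the shift by $\phi^2$ does not change this value, by periodicity.

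It remains to evaluate the integral. Since $(\lambda\mathcal P)'=f$, we have $\mathcal P'=(f-\mathcal P)/\lambda$, and integrating over a full period gives
\[
\int_{\text{period}}\mathcal P\,\frac{d\lambda}{\lambda}=\int_{\text{period}} f\,\frac{d\lambda}{\lambda}.
\]
Periodicity lets us take the period to be $[\phi^{-3},\phi]$. Then split at $\phi^{-1}$ and change variables $\theta=\Theta(v)$ on each branch:
\begin{itemize}
\item on $[\phi^{-3},\phi^{-1}]$, where $\theta=\phi^2v/\sqrt5$, we get $dv/v=d\theta/\theta$;
\item on $[\phi^{-1},\phi]$, where $\theta=1-v/\sqrt5$, we get $dv/v=d\theta/(1-\theta)$.
\end{itemize}
Both branches map onto $J$. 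Adding them gives
\[
\int_\alpha^\beta g_{\ell,m}(\theta)\Bigl(\frac1\theta+\frac1{1-\theta}\Bigr)d\theta=\int_\alpha^\beta\frac{g_{\ell,m}(\theta)}{\theta(1-\theta)}\,d\theta,
\]
which is \eqref{eq:log-density}. The delicate points are bookkeeping ones: that the $o(1)$ error integrates to $o(\log X)$, and that the Fibonacci-versus-$\phi$-power discrepancy in the block endpoints is harmless.
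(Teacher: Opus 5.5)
Your proposal is correct and follows essentially the same route as the paper. You use compactness of $\mathcal P_{\ell,m}([1,\phi^4])$ for the lower, upper and natural densities, the identity $(\lambda\mathcal P_{\ell,m})'=g_{\ell,m}\circ\Theta$ together with the cycle-average formula for constancy and positivity, and partial summation over the blocks $[F_t^2,F_{t+2}^2)$, followed by periodicity and the two branch substitutions $\theta=\phi^2v/\sqrt5$ and $\theta=1-v/\sqrt5$, exactly as in the paper's proof.
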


\begin{proof}
Let $f_{\ell,m}=g_{\ell,m}\circ\Theta$. By
Theorem~\ref{thm:phase-density}, both normalized counting functions
have $\mathcal P_{\ell,m}([1,\phi^4])$ as their set of subsequential
limits. By continuity and compactness, its minimum and maximum are the
common lower and upper densities. A natural density exists exactly when
this set is a singleton. By \eqref{eq:phase-profile},
$\mathcal P_{\ell,m}\equiv c$ implies $f_{\ell,m}\equiv c$ after
differentiation, while the converse follows directly from the integral.
Since $\Theta((0,\infty))=J$, this is equivalent to constancy of
$g_{\ell,m}$ on $J$.

By definition, $g_{\ell,m}\geq0$. If $g_{\ell,m}\not\equiv0$, then by
continuity $f_{\ell,m}$ is positive on a subinterval of every
multiplicative period. By \eqref{eq:cycle-average},
$\mathcal P_{\ell,m}(\lambda)>0$ for every $\lambda>0$. By compactness,
its minimum on $[1,\phi^4]$ is positive. If
$g_{\ell,m}\equiv0$, then by \eqref{eq:phase-profile},
$\mathcal P_{\ell,m}\equiv0$.

For $t\geq1$, let $\lambda_t=t/F_{t_*(t)}^2$.
For $E\in\{D_\ell(m),U_\ell(m)\}$, let $A_E(X)=|E\cap[1,X]|$ and
\[
 Q_E(\lambda)=
 \begin{cases}
 \mathcal P_{\ell,m}(\lambda),&E=D_\ell(m),\\
 \mathcal P_{\ell,m}(\phi^2\lambda),&E=U_\ell(m).
 \end{cases}
\]
By Theorem~\ref{thm:phase-density},
$A_E(t)/t=Q_E(\lambda_t)+e_E(t)$, where $e_E(t)=o(1)$. By partial
summation,
\[
 \sum_{\substack{n\leq X\\n\in E}}\frac1n
 =\frac{A_E(X)}X+\int_1^X\frac{A_E(t)}{t^2}\,dt.
\]
The contribution of $e_E$ is $o(\log X)$, as follows by splitting the
integral at $T_\varepsilon$. Since $F_{2r}=(\phi^{2r}-\phi^{-2r})/\sqrt5$, we have
\[
F_{2r}^2=\frac{\phi^{4r}}5(1-\phi^{-4r})^2,\qquad
R_r:=\frac{F_{2r+2}^2}{F_{2r}^2}\to\phi^4,\qquad
\log F_{2r}^2=4r\log\phi+O(1).
\]
By continuity and multiplicative $\phi^4$ periodicity,
\[
 \int_{F_{2r}^2}^{F_{2r+2}^2}
 Q_E\!\left(\frac{t}{F_{2r}^2}\right)\frac{dt}{t}
 =\int_1^{R_r}Q_E(\lambda)\frac{d\lambda}{\lambda}
 =I+o(1),
\]
where
\[
 I=\int_1^{\phi^4}\mathcal P_{\ell,m}(\lambda)
 \frac{d\lambda}{\lambda}.
\]
Thus $N$ complete phase intervals contribute $NI+o(N)$, their number
below $X$ is $(\log X)/(4\log\phi)+O(1)$, and the final partial interval
contributes $O(1)$. Since $A_E(X)/X=O(1)$, partial summation shows that
both logarithmic densities equal $I/(4\log\phi)$.

By the $4\log\phi$ periodicity of
$s\mapsto\mathcal P_{\ell,m}(e^s)$,
\[
 I=\int_{\phi^{-3}}^\phi\mathcal P_{\ell,m}(v)\frac{dv}{v}.
\]
By \eqref{eq:phase-profile},
$(v\mathcal P_{\ell,m}(v))'=f_{\ell,m}(v)$. Since
$\mathcal P_{\ell,m}(\phi)=\mathcal P_{\ell,m}(\phi^{-3})$,
\[
 I=\int_{\phi^{-3}}^\phi f_{\ell,m}(v)\frac{dv}{v}.
\]
By the substitutions $\theta=\phi^2v/\sqrt5$ and
$\theta=1-v/\sqrt5$ on the two branches of $\Theta$,
\[
\begin{aligned}
 I&=\int_\alpha^\beta\frac{g_{\ell,m}(\theta)}{\theta}\,d\theta
   +\int_\alpha^\beta\frac{g_{\ell,m}(\theta)}{1-\theta}\,d\theta\\
  &=\int_\alpha^\beta
    \frac{g_{\ell,m}(\theta)}{\theta(1-\theta)}\,d\theta.
\end{aligned}
\]
This proves \eqref{eq:log-density}.
\end{proof}

\begin{proof}[\bf Proof of Corollary~\ref{cor:first-density}]
Since $D_1=U_1=\Gamma_1=\{1,2,3,5\}$, the first order classes are
empty for all other $m$. Let $g_m=g_{1,m}$. For
$A_\theta=[0,\theta)$ and $Rz=\{z+\phi^{-1}\}$, define
\[
 C_m(\theta)=A_\theta\cap R^{-m}A_\theta
 \cap\bigcap_{h=1}^{m-1}R^{-h}\bigl([0,1)\setminus A_\theta\bigr).
\]
By definition, $C_m(\theta)=\{z\in A_\theta:r_\theta(z)=m\}$ and
$g_m(\theta)=\operatorname{Leb}C_m(\theta)$. By
Lemma~\ref{lem:return-map}, the lengths of these first return cells are
\[
\begin{array}{c|cccc}
\theta&g_1(\theta)&g_2(\theta)&g_3(\theta)&g_5(\theta)\\ \hline
\alpha\leq\theta\leq\phi^{-2}
 &0&\theta-\phi^{-3}&\theta-\phi^{-4}&\phi^{-2}-\theta\\
\phi^{-2}\leq\theta\leq\phi^{-1}
 &\theta-\phi^{-2}&\theta-\phi^{-3}&\phi^{-1}-\theta&0\\
\phi^{-1}\leq\theta\leq\beta
 &2\theta-1&1-\theta&0&0.
\end{array}
\]
By the Lipschitz continuity of $g_m$, the formulas also hold at the
endpoints.

The table shows that each $g_m$, $m\in\{1,2,3,5\}$, is nonzero and
nonconstant on $J$. By Corollary~\ref{cor:density-consequences}, the
corresponding refined sets have positive lower density but no natural
density, and
\[
 4\mathfrak d_m\log\phi
 =\int_\alpha^\beta
 \frac{g_m(\theta)}{\theta(1-\theta)}\,d\theta.
\]
On every affine piece,
\[
 \int\frac{A\theta+B}{\theta(1-\theta)}\,d\theta
 =B\log\theta-(A+B)\log(1-\theta).
\]
By evaluation at $\alpha,\phi^{-2},\phi^{-1},\beta$, the four values
stated in the corollary follow.
\end{proof}

\section*{Acknowledgements}
\noindent
The authors declare that there is no conflict of competing interest.


\begin{thebibliography}{99}
\bibitem{Cohn} J.~H.~E. Cohn, Recurrent sequences including $N$, \emph{Fibonacci Quart.} 29 (1991), 30--36.

\bibitem{JonesKiss} J.~P. Jones and P.~Kiss, Representation of integers as terms of a linear recurrence with maximal index, \emph{Acta Acad. Paedagog. Agriensis Sect. Math. (N.S.)} 25 (1998), 21--37.

\bibitem{EnglundBicknell} D.~A. Englund and M.~Bicknell-Johnson, Maximal subscripts within generalized Fibonacci sequences, \emph{Fibonacci Quart.} 38 (2000), 104--113.

\bibitem{CGS} F.~Chung, R.~L. Graham, and S.~Spiro, Slow Fibonacci walks, \emph{J. Number Theory} 210 (2020), 142--170.

\bibitem{Spiro} S.~Spiro, Slow recurrences, \emph{J. Number Theory} 218 (2021), 370--401.

\bibitem{AlessandriBerthe} P.~Alessandri and V.~Berth\'e, Three distance theorems and combinatorics on words, \emph{Enseign. Math.} 44(2) (1998), 103--132.

\end{thebibliography}
\end{document}